\documentclass[11pt]{amsart} 
\usepackage{latexsym}
\usepackage{amsfonts}
\usepackage{amsmath,amsthm,amssymb}
\usepackage{fullpage}
\usepackage{mathabx}
\usepackage{tikz}
\usetikzlibrary{matrix}
\usepackage[colorlinks=true, linkcolor=blue, citecolor=magenta]{hyperref}
\usepackage{mathrsfs}
\usepackage{soul}
\usepackage{arydshln}

\newcommand{\cal}{\mathcal}

%%%%%%%%%

\def\epsilon{\varepsilon}
\def\phi{\varphi}

\def\subset{\subseteq}

\newcommand{\Out}{\mbox{Out}}

\newcommand{\card}{\mbox{card}}

%{\mbox{pr}}
%\newcommand{\stab}{\mbox{stab}}
\newcommand{\bvec}{\overleftarrow}

%%%%%

\newcommand{\FN}{F_N}   % F ou F_n ou F_N ?

   % F ou F_n ou F_N ?

\newcommand{\R}{\mathbb R}
\newcommand{\Z}{\mathbb Z}

\newcommand{\N}{\mathbb N}

%% Multi-author Editing macros -----------------------------------------------

\def\strutdepth{\dp\strutbox}
\def \ss{\strut\vadjust{\kern-\strutdepth \sss}}
\def \sss{\vtop to \strutdepth{
\baselineskip\strutdepth\vss\llap{$\diamondsuit\;\;$}\null}}

\def\strutdepth{\dp\strutbox}
\def \sst{\strut\vadjust{\kern-\strutdepth \ssss}}
\def \ssss{\vtop to \strutdepth{
\baselineskip\strutdepth\vss\llap{$\spadesuit\;\;$}\null}}

\def\strutdepth{\dp\strutbox}
\def \ssh{\strut\vadjust{\kern-\strutdepth \sssh}}
\def \sssh{\vtop to \strutdepth{
\baselineskip\strutdepth\vss\llap{$\heartsuit\;\;$}\null}}

%%% --------------------------------------------------------------------------

\def\qed{\hfill\rlap{$\sqcup$}$\sqcap$\par}

%%%%%%%%end from CHL3 %%%%%%%%

% Over-full v-boxes on even pages are due to the \v{c} in author's name
\vfuzz2pt % Don't report over-full v-boxes if over-edge is small

%% Multi-author Editing macros -----------------------------------------------

\def\strutdepth{\dp\strutbox}
\def \ss{\strut\vadjust{\kern-\strutdepth \sss}}
\def \sss{\vtop to \strutdepth{
\baselineskip\strutdepth\vss\llap{$\diamondsuit\;\;$}\null}}

\def\strutdepth{\dp\strutbox}
\def \sst{\strut\vadjust{\kern-\strutdepth \ssss}}
\def \ssss{\vtop to \strutdepth{
\baselineskip\strutdepth\vss\llap{$\spadesuit\;\;$}\null}}
%%% --------------------------------------------------------------------------

\def\qed{\hfill\rlap{$\sqcup$}$\sqcap$\par}

% THEOREM Environments ---------------------------------------------------

\newtheorem{thm}{Theorem}[section]
\newtheorem{cor}[thm]{Corollary}
\newtheorem{lem}[thm]{Lemma}
\newtheorem{prop}[thm]{Proposition}

\newtheorem{fact}[thm]{Fact}
\theoremstyle{definition}
\newtheorem{defn}[thm]{Definition}
\newtheorem{example}[thm]{Example}

\newtheorem{rem}[thm]{Remark}
\newtheorem{defn-rem}[thm]{Definition-Remark}

\newtheorem{question}[thm]{Question}
\theoremstyle{remark}

\numberwithin{equation}{section}

\begin{document}

\author[N.~Bedaride]{Nicolas B\'edaride}
\author[A.~Hilion]{Arnaud Hilion}
\author[M.~Lustig]{Martin Lustig}

\address{\tt 
Aix Marseille Universit\'e, CNRS, Centrale Marseille, I2M UMR 7373,
13453  Marseille, France}
\email{\tt Nicolas.Bedaride@univ-amu.fr}
\email{\tt Arnaud.Hilion@univ-amu.fr}
\email{\tt Martin.Lustig@univ-amu.fr}

\title[Tower power]{Tower power for $S$-adics}
 
\begin{abstract} 
We explain and restate the results from our recent paper \cite{BHL1} in standard language for substitutions and $S$-adic systems in symbolic dynamics. We then produce as rather direct 
application an $S$-adic system (with finite set of substitutions $S$ on $d$ letters) that is minimal and 
has $d$ distinct ergodic probability measures. 

As second application we exhibit a formula that allows an efficient practical computation of the cylinder measure $\mu([w])$, for any word $w \in \cal A^*$ and any invariant measure $\mu$ on the subshift $X_\sigma$ defined by any everywhere growing but not necessarily primitive or irreducible substitution $\sigma: \cal A^* \to \cal A^*$.
Several examples are considered in detail, and model computations are presented.
\end{abstract}

\subjclass[2010]{Primary 37B10, Secondary 37A25, 37E25}
%{Primary 20F36, Secondary 20E36, 57M05} 
%\subjclass[2000]{Primary 20F, Secondary 20E, 57M}
%, 37B, 37D}
 
\keywords{$S$-adic expansion, non-uniquely ergodic subshift, substitution, cylinder weights}
 
\maketitle 

\section{Introduction}
\label{introduction} 

Symbolic dynamics has undergone in the past 20-30 years a sequence of several fundamental changes in what people considered the most promising tool to investigate symbolic dynamical systems and their invariant measures. After first having developed the technology of Kakutani-Rokhlin towers, Bratteli diagrams with Vershik maps were recognized as (in many situations) more promising. More recently $S$-adic systems have moved into the lime light.

The authors of this note have very recently made public an 
exposition \cite{BHL1} of 
yet another technology, which we believe 
to be perhaps easier to learn and 
in some situations 
more efficient to apply than the previously existing ones. It also seems to have in many situations the potential for further reaching results.

While writing the paper \cite{BHL1} the authors also had in mind future applications to geometric group theory, most notably to currents on free groups $\FN$ and to the action of the automorphism group $\Out(\FN)$ on current space. Since in this world it is natural to admit at all times inverses to the letters that generate the system, the paper \cite{BHL1} was written for graphs and graph maps rather than for monoids and substitutions, which makes our results less accessible for 
somebody working in the traditional 
settings 
of symbolic dynamics.

Hence a translation into the classical terminology of symbolic dynamics seemed to be called for, and it is provided here (see section \ref{from-BHL1}). Instead of a proof (which is a direct consequence of the main result of \cite{BHL1}, together with the translation dictionary given in section 3 of \cite{BHL1}), we proceed here to present two applications of our new technology. Both are new results (to our knowledge), 
but what is almost more important to the authors is to convince the reader about the relatively small effort by which they are derived from the results of \cite{BHL1}: We thus would like to point the symbolic dynamics community's attention to this new technology, which we believe might be useful also for other people working in this area.

\medskip

We now give a brief description of the results of this paper:

\smallskip

Let $\cal A$ be a finite alphabet of cardinality $d \geq 1$, and let $X$ be a subshift over $\cal A$. Assume that $X$ possesses an $S$-adic expansion over $\cal A$ (see section \ref{prelims}), where $S$ is a possibly infinite set of  substitutions $\sigma_n: \cal A^* \to \cal A^*$, then it is well known that $X$ supports at most $d$ distinct ergodic probability measures. Our first result, 
presented in section \ref{d-measures1}, concerns the realization of this bound and can be stated as follows:

\begin{prop}
\label{d-measures}
(1)
For any integer $d \geq 1$ there exists a directive sequence $\sigma = \sigma_0 \circ \sigma_1 \circ \ldots$, with level alphabets $\cal A_n$ all of cardinality $d$, such that the associated subshift $X_\sigma$ is minimal and supports $d$ distinct invariant ergodic probability measures.

\smallskip
\noindent
(2) 
There is a finite set $S$ (consisting of $4$ substitutions) such that the above substitutions $\sigma_0, \sigma_1, \ldots$ can all be chosen from $S$.
\end{prop}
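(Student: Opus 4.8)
The plan is to recast the statement as a question about a single nested family of cones and then to engineer the incidence matrices so that this family stays as large as the upper bound permits while the directive sequence remains primitive. Writing $M_n$ for the incidence matrix of $\sigma_n$ and $M_{[0,n)} := M_0 M_1 \cdots M_{n-1}$, the translation of \cite{BHL1} into $S$-adic language (its Section~3) together with its main theorem identifies the invariant probability measures of $X_\sigma$ with the normalized points of the limit cone
\[
C_\infty \;=\; \bigcap_{n\ge 0} M_{[0,n)}\,\mathbb{R}_{\ge 0}^{\mathcal{A}_n},
\]
the ergodic ones corresponding to the extreme rays of $C_\infty$; moreover $X_\sigma$ is minimal as soon as the directive sequence is primitive, i.e.\ for every $n$ there is an $m>n$ with $M_{[n,m)}$ strictly positive. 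Since $C_\infty$ is a pointed subcone of $\mathbb{R}_{\ge 0}^{d}$, full-dimensionality forces it to have at least $d$ extreme rays, while the bound recalled in the introduction forces at most $d$; hence it suffices to build a primitive directive sequence, with every level alphabet of size $d$, for which $C_\infty$ is $d$-dimensional.

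For part~(1) I would work on $\mathcal{A}=\{1,\dots,d\}$ (the case $d=1$ being trivial) and alternate two kinds of substitutions: diagonal \emph{growth} substitutions $\gamma$ that merely thicken each letter's tower (so that $M_\gamma$ is diagonal and each coordinate ray is preserved exactly, keeping the $d$ candidate directions apart), and a \emph{coupling} substitution $\kappa$ whose matrix adds a small amount of off-diagonal mass linking all the letters (so that suitable windows become strictly positive, forcing primitivity and hence minimality). Taking the sequence $\gamma^{k_0}\kappa\,\gamma^{k_1}\kappa\cdots$ with $k_n\to\infty$, the couplings become increasingly rare; the growth blocks sitting \emph{between} consecutive couplings are what prevent the $d$ coordinate directions from merging, so that the nested cones $M_{[0,n)}\mathbb{R}_{\ge 0}^{\mathcal{A}_n}$ retain a fixed $d$-dimensional spread and $C_\infty$ stays $d$-dimensional, yielding exactly $d$ ergodic measures.

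For part~(2) the decisive observation is that this scheme already draws its substitutions from a fixed finite list: the unbounded exponents $k_n$ are produced simply by iterating the single fixed substitution $\gamma$, so no new substitution enters as $n$ grows. To make $\gamma$ and $\kappa$ genuine everywhere-growing substitutions given by a rule uniform in $d$, to keep each level alphabet of size exactly $d$, and to arrange the coupling so that primitivity coexists with the separation estimate below, I would split their roles into a handful of elementary pieces; bundling growth, coupling and the necessary bookkeeping, four substitutions suffice, and the same four serve every $d$.

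The hard part will be the quantitative balance that is the whole point of the example: minimality demands that genuine coupling recur forever, whereas preserving $d$ distinct measures demands that the cumulative effect of these couplings be negligible. In Hilbert-metric terms, each coupling window contracts by a Birkhoff--Hopf factor $\theta_n\in(0,1]$, and $C_\infty$ keeps full dimension precisely when the total contraction does not vanish, i.e.\ when $\sum_{n}(1-\theta_n)<\infty$; the interposed growth blocks, whose length $k_n$ I am free to choose, are exactly the tool that drives $\theta_n\to 1$ fast enough for this sum to converge. The crux is therefore to show that the growth re-separates the coordinate directions faster than the coupling merges them, keeping $C_\infty$ genuinely $d$-dimensional rather than collapsing onto a proper face; this is the one estimate that the construction must supply, and it is precisely the kind of bookkeeping that the ``tower power'' machinery of \cite{BHL1} is designed to make routine.
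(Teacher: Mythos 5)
Your high-level framing coincides with the paper's: identify the invariant measures with the limit cone $\cal C_\infty = \bigcap_n M_{[0,n)}(\R^d_{\geq 0})$, get minimality from weak primitivity, and arrange $\dim \cal C_\infty = d$. But the concrete construction you propose for part (1) does not work, and the ``one estimate that the construction must supply'' is exactly where it breaks. A diagonal incidence matrix $D$ with positive diagonal maps $\R^d_{\geq 0}$ \emph{onto} itself, i.e.\ it acts as an isometry of the Hilbert projective metric on the interior of the cone; consequently your growth blocks $\gamma^{k_n}$ do nothing at all to re-separate directions. If $M_\kappa$ is positive, Birkhoff's theorem gives a fixed contraction coefficient $\theta = \tanh\bigl(\Delta(M_\kappa)/4\bigr) < 1$, and since $\Delta\bigl(D^{k} M_\kappa\bigr) = \Delta(M_\kappa)$ for every $k$ (isometry), \emph{every} coupling window contracts by at least this same fixed $\theta$, no matter how large $k_n$ is. After $j$ couplings the image cone has Hilbert diameter at most $\theta^{\,j-1}\Delta(M_\kappa) \to 0$, so $\cal C_\infty$ is a single ray and $X_\sigma$ is uniquely ergodic --- the opposite of what is wanted. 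Your central claim that the interposed growth blocks ``drive $\theta_n \to 1$'' is therefore false, and enlarging $k_n$ cannot help; for non-positive couplings (e.g.\ the Fibonacci matrix for $d=2$, which is forced on you if you want primitivity from a single non-positive $\kappa$) a direct computation shows the nested cones still collapse to a ray, and faster growth of $k_n$ only accelerates the collapse.

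What the paper does instead --- and what any correct construction must do --- is make the coupling-to-growth ratio decay \emph{inside each individual incidence matrix}: it takes $M_n = \ell(n) I_d + 1_{d\times d}$ with $\ell(n) \to \infty$ summably fast, and verifies by a direct computation (no Hilbert metric needed) that the images of the extreme rays $\vec e_k$ converge to $d$ linearly independent rays $\vec e_k + K_\infty \vec c$, so $\cal C_\infty$ is simplicial of dimension $d$. Writing $\ell I_d + 1_{d\times d} = \ell\,(I_d + \tfrac{1}{\ell} 1_{d\times d})$ makes visible that the coupling must become arbitrarily weak relative to the diagonal, which a fixed $\kappa$ applied once per window can never be. For part (2) the paper recovers this decaying coupling from a \emph{finite} set of substitutions by a number-theoretic device your proposal has no counterpart for: it uses two growth substitutions with multiplicatively independent rates, $\rho_j : a_j \mapsto a_j^3$ and $\theta_j : a_j \mapsto a_j^2$, so that by density of $\{2^m/3^q\}$ the coupling $\tau_j$ can be applied with exponent $h = 3^q - 2^m$, positive yet arbitrarily small compared with the diagonal entries $2^m \approx 3^q$; a cone inclusion $M_\epsilon(\R^d_{\geq 0}) \subset M'_{\epsilon, j}(\R^d_{\geq 0})$ then reduces part (2) to part (1). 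With a single growth substitution $\gamma$, no choice of exponents makes the coupling relatively small, so your ``four substitutions suffice'' cannot be salvaged along these lines. (The one ingredient you share with the paper's part (2) is the correct observation that growing exponents of fixed substitutions stay inside a finite set $S$ after telescoping; the paper's four substitutions are $\rho_1, \theta_1, \tau_1$ and a cyclic permutation $\pi$.)
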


This realization result contrasts in part (1) 
with the well-known upper bound $\lfloor \frac{d}{2} \rfloor$ for the number of ergodic probability measures (see 
\cite{Katok, Veech}), in the case where $X_\sigma$ is read off from an 
interval exchange transformation. 
Part (2) seems to contradict at first glance known results from Bratteli-Vershik theory, see Remark \ref{comment-referee} below.
We would also like to mention that Proposition \ref{d-measures} is preceded in the literature by several related results, see for instance \cite{ABKK}, \cite{1709.00055} and \cite{FFT}. 

\medskip

Our second result concerns the concrete calculation of the measure of any cylinder $[w]$ with $w \in \cal A^*$, for an arbitrary subshift $X$ over $\cal A = \{a_1, \ldots, a_d\}$. If $X$ is uniquely ergodic, the question is unambiguous, but in general one needs to specify the measure $\mu$ in question. The main result of \cite{BHL1} as stated here in Theorem \ref{thm1} presents a tool for such a specification in rather practical terms, so that a direct calculation (with controlled error term) is possible, once an everywhere growing $S$-adic expansion of $X$ is chosen (see Corollary \ref{cylinder-computation}).

In section \ref{cylinder-determination} we concentrate on the special case of a substitution subshift $X = X_\sigma$ for any everywhere growing (but not necessarily primitive or irreducible) substitution $\sigma: \cal A^* \to \cal A^*$. It is known (see Proposition \ref{mieux-que-moulinette}) that the ergodic measures $\mu_i$ on $X_\sigma$ are in 1-1 correspondence with certain ``distinguished'' eigenvectors $\vec v_i \geq \vec 0$ with eigenvalues $\lambda_i > 1$ of the incidence matrix $M_{\sigma}$. 
Our determination of the cylinder measures is based on a natural extension of the incidence matrix $M_{\sigma}$ to a $(d^2+d)\times(d^2+d)$-matrix $M_\sigma^+$ (for $d = \card \cal A$), and on corresponding prolongations of the eigenvectors $\vec v_i$ to eigenvectors $\vec v^+_i$, as well as on 
``occurrence vectors'' $\vec v_n(w)$ obtained from counting the occurrences of $w$ as factor in the words $\sigma^n(a_i)$ and $\sigma^n(a_i a_j)$.
The precise terms of the following proposition are explained below in section \ref{cylinder-determination};
it should be noted though that both, $\vec v_n(w)$ and the $\vec v_i^+$, can be readily computed from $\sigma$ and $w$, and also the lower bound ``$(\sigma, w)$-large'' used below.

\begin{prop}
\label{cylinder-formula}
Let $\sigma: \cal A^* \to \cal A^*$ be an everywhere growing substitution, and let $\mu = \sum c_i \mu_i$ be any invariant measure on the substitution subshift $X_\sigma$, expressed as non-negative linear combination of 
ergodic measures $\mu_i$.

Then for any $w \in \cal A^*$ and any $(\sigma, w)$-large integer $n \in \N$ the measure of the cylinder $[w]$ is given by the scalar product 
$$\mu([w]) \, =\, 
\langle \, \vec v_n(w), \, \sum c_i \frac{1}{\lambda_i^n} \vec v^+_i \,\rangle$$
\end{prop}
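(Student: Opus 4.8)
The plan is to first reduce to the ergodic case and then to recognize the scalar product as an exact, level-$n$-independent encoding of the frequency of $w$. Since $\mu([w]) = \sum c_i\, \mu_i([w])$ depends linearly on the coefficients $c_i$, and the claimed right-hand side $\langle \vec v_n(w), \sum c_i \frac{1}{\lambda_i^n}\vec v_i^+\rangle = \sum c_i \frac{1}{\lambda_i^n}\langle \vec v_n(w), \vec v_i^+\rangle$ is linear as well, it suffices to prove the single-measure identity $\mu_i([w]) = \frac{1}{\lambda_i^n}\langle \vec v_n(w), \vec v_i^+\rangle$ for each ergodic $\mu_i$ and each $(\sigma,w)$-large $n$. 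This separates the combinatorics of counting occurrences from the bookkeeping of the linear combination.

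Next I would make explicit the block structure of the occurrence vector. Its first $d$ coordinates record the number of occurrences of $w$ as a factor lying entirely inside each image $\sigma^n(a_k)$, while its remaining $d^2$ coordinates record the occurrences that straddle the junction of two consecutive images $\sigma^n(a_k)\sigma^n(a_l)$. The hypothesis that $n$ is $(\sigma,w)$-large is precisely the statement that every $\sigma^n(a_k)$ is long enough relative to $|w|$ that no occurrence of $w$ can meet three or more consecutive level-$n$ blocks; this is what makes counting over single letters $a_k$ and ordered pairs $a_k a_l$ exhaustive, and it is the reason the extended matrix $M_\sigma^+$ has exactly $d + d^2$ rows and columns. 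I would then verify that, under this same hypothesis, passing from level $n$ to level $n+1$ is governed by an \emph{exact} linear map built from $M_\sigma^+$, in which the letter- and pair-counts of the images $\sigma(a_k)$ and $\sigma(a_k a_l)$ redistribute the bulk and boundary occurrences, with no error term once triple-block straddling is excluded.

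With this recursion in hand I would invoke the main result of \cite{BHL1} (Theorem \ref{thm1}) specialized to the constant directive sequence $\sigma_n = \sigma$, together with Proposition \ref{mieux-que-moulinette}. In this setting $\mu_i$ is the measure carried by the distinguished eigenvector $\vec v_i$, and its prolongation $\vec v_i^+$ satisfies the exact eigenvector equation $M_\sigma^+ \vec v_i^+ = \lambda_i \vec v_i^+$, with its first $d$ coordinates equal to the $\mu_i$-weights of the letters and its last $d^2$ coordinates recording the boundary corrections between consecutive letters. Pairing the level-$n$ recursion against this eigenvector and using $M_\sigma^+ \vec v_i^+ = \lambda_i \vec v_i^+$ repeatedly turns the $n$-dependence into a single clean factor $\lambda_i^n$; hence $\frac{1}{\lambda_i^n}\langle \vec v_n(w), \vec v_i^+\rangle$ is independent of $n$ for all $(\sigma,w)$-large $n$. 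Its common value is the $\mu_i$-frequency of $w$, which by the ergodic theorem and the eigenvector characterization of Proposition \ref{mieux-que-moulinette} equals $\mu_i([w])$, completing the single-measure identity.

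The step I expect to be the main obstacle is the exact matching of the $d^2$ boundary coordinates: one must check that the additional rows of $M_\sigma^+$ and the corresponding coordinates of the prolonged eigenvector $\vec v_i^+$ reproduce the straddling contributions counted by $\vec v_n(w)$ with neither double-counting against the bulk term nor omission of occurrences sitting at the extreme ends of a block, and that the $(\sigma,w)$-large bound is sharp enough to forbid triple-block occurrences for all $d$ letters simultaneously. Once this accounting is pinned down, the exactness for every large $n$, and not merely in the limit, follows formally from the eigenvector equation.
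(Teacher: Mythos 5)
Your proposal is correct, but it follows a genuinely different route from the paper's own proof, which is much shorter: there, after the same reduction by linearity to a single ergodic $\mu_i$, one observes that $\mu_i$ is the measure $\mu^{\bvec v}$ of the stationary $\sigma$-compatible vector tower $\bvec v = \left(\frac{1}{\lambda_i^n}\vec v_i\right)_{n \geq 0}$, and then the claimed scalar product is \emph{literally} the finite-level formula of Corollary \ref{cylinder-computation} (imported from Propositions 6.9 and 7.4 of \cite{BHL1}), once the transition weights $\omega^n_{a,a'}$ are identified with the pair coordinates of $\frac{1}{\lambda_i^n}\vec v^+_i$. You never invoke Corollary \ref{cylinder-computation} or the weights $\omega^n_{a,a'}$ at all; instead you combine the transfer identity ${}^t\vec v(w)_{n+1} = {}^t\vec v(w)_n \cdot M^+_\sigma$ for $(\sigma,w)$-large $n$ (this is exactly equation \eqref{n-minus-m} of Remark \ref{word-vector}\,(2), and your justification via exclusion of triple-block straddling is the right one) with the eigenvector equation for $\vec v_i^+$, to conclude that $\frac{1}{\lambda_i^n}\langle \vec v_n(w), \vec v^+_i\rangle$ is independent of $n$, and then identify the common value by a limit argument. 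This buys a more self-contained, purely linear-algebraic proof that bypasses the weight formalism of \cite{BHL1}; what it costs is precisely that limit identification, which is the one step you should tighten: the appeal to ``the ergodic theorem'' is not the right citation and does not by itself connect weighted occurrence counts to cylinder measures. What does the job is: (i) the $d^2$ boundary coordinates of $\vec v_n(w)$ are bounded by $|w|-1$ uniformly in $n$, so their pairing against the pair coordinates of $\frac{1}{\lambda_i^n}\vec v^+_i$ tends to $0$ as $n \to \infty$, since $\lambda_i > 1$; and (ii) the letter part $\sum_{a} \frac{1}{\lambda_i^n}\vec v_i(a)\,|\sigma^n(a)|_w$ converges to $\mu_i([w])$ by equation \eqref{approximation-sum} of Theorem \ref{thm1}\,(4) applied to the stationary tower above. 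With (i) and (ii) substituted for the ergodic-theorem sentence, the constancy in $n$ that you established pins the common value to $\mu_i([w])$ for every $(\sigma,w)$-large $n$, and your argument is complete.
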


Several examples where the formula from Proposition \ref{cylinder-formula} is  applied to calculate the value of concretely given cylinders are given at the end of 
the paper in 
section \ref{section-examples}.

\medskip
\noindent
{\em Acknowledgements:} The authors would like to thank Julien Cassaigne and Pascal Hubert for useful comments, as well as 
our 
marseillan symbolic dynamics community for its inspiring atmosphere.
We would also like to thank the referee for 
his careful reading of 
the first version, and 
for having 
encouraged us to include Remark \ref{comment-referee}.

%%%%%%%%%%%%%%%%
\section{Preliminaries}
\label{prelims}

In this section we only review standard definitions and facts, and we set up the notation used in this paper.
We use \cite{BD} as standard reference; indeed, we try to use as much as possible their terminology and notations.

\subsection{Subshifts}
\label{generalities}

${}^{}$

Let $\cal A = \{a_1, \ldots, a_d\}$ be a finite set, called {\em alphabet}. We denote by $\cal A^*$ the free monoid over $\cal A$. 
For any element $w \in \cal A^*$ we denote by $|w |$ the length of $w$ as word in the alphabet $\cal A$. For any any two words $v, w \in \cal A^*$ we write $|w|_v$ for the number of occurrences of $v$ as factor of $w$.

We denote by
$$\Sigma_\cal A = \{\ldots x_{-1} x_0 x_1 x_2 \ldots \mid x_i \in \cal A \}$$
be the set of biinfinite words in $\cal A$, called the {\em full shift} over $\cal A$. 
For any word 
$w = w_1 \ldots w_s \in \cal A^*$ 
the {\em cylinder} 
$$[w] \subseteq\Sigma_\cal A$$
is 
the set of all biinfinite words $\ldots x_{-1} x_0 x_1 x_2 \ldots$ in $\cal A$ which satisfy 
$x_1 = w_1, \ldots, x_s = w_s$. The full shift $\Sigma_\cal A$, being in bijection with the set $\cal A^\Z$, is naturally equipped with the product topology, where $\cal A$ is given the discrete topology. For any $w \in \cal A^*$ the cylinder $[w]$
is closed and open. The full shift $\Sigma_\cal A$ is compact, and indeed it is a Cantor set.

The shift map $S:\Sigma_\cal A \to \Sigma_\cal A$ is defined for $x = \ldots x_{-1} x_0 x_1 x_2 \ldots$ by $S(x) = \ldots y_{-1} y_0 y_1 y_2 \ldots$, with $y_n = x_{n+1}$ for all $n \in \Z$. It is bijective and continuous with respect to the above product topology, and hence a homeomorphism. 

A {\em subshift} is a non-empty closed subset $X$ of $\Sigma_\cal A$ which is invariant under the shift map $S$. Such a subshift $X$ is called {\em minimal} if it is the closure of the shift-orbit of any $x \in X$.

Let $\mu$ be a finite Borel measure supported on a subshift $X\subseteq\Sigma_\cal A$. The measure is called {\em invariant} if for every measurable set $A\subseteq X$ one has $\mu(S^{-1}(A))=\mu(A)$.  Such a measure $\mu$ is 
{\em ergodic} if $\mu$ can not be written in any non-trivial way as sum $\mu_1 + \mu_2$ of two invariant measures $\mu_1$ and $\mu_2$ (i.e. $\mu_1 \neq 0 \neq \mu_2$ and $\mu_1 \neq \lambda \mu_2$ for any $\lambda \in \R_{> 0}$). An invariant measure is called a {\em probability measure} if $\mu(X) = 1$, which is equivalent to $\underset{a_i \in \cal A}{\sum} \mu([a_i]) = 1$.
We denote by $\cal M(X)$ the set of invariant measures on $X$, and by 
$\cal M_1(X) \subset \cal M(X)$ the subset of probability measures.

The set $\cal M(X)$ 
is naturally equipped with an addition and an external multiplication with scalars $\lambda \in \R_{\geq 0}$. 
It is well known 
(see \cite{Walters}) that any invariant measure $\mu$ is determined by the values $\mu([w])$ 
for all $w \in \cal A^*$.
Hence 
the set $\cal M(X)$ is a convex linear cone which through $\mu \mapsto (\mu([w])_{w \in \cal A^*}$ is naturally embedded into the non-negative cone of the 
infinite dimensional 
vector space 
$\R^{\cal A^*}$. 
The cone $\cal M(X)$  is closed, and  the extremal vectors of $\cal M(X)$ are in 1-1 relation with the ergodic measures on $X$.
Furthermore, $\cal M_1(X)$ is compact, and it is the closed convex hull of its extremal points.
The following is well known (see
\cite{Walters}):

\begin{prop}
\label{Fer-Mont}
For any subshift $X \subset \Sigma_\cal A$
any family of ergodic measures $\mu_i \in \cal M(X)$, which are pairwise not scalar multiples of each other, is linearly independent.

In particular, if $X$ admits (up to scalar multiples) only finitely many ergodic measures, then $\cal M_1(X)$ is a finite simplex with vertices that are in 1 - 1 correspondence with the ergodic probability measures on $X$.
\qed
\end{prop}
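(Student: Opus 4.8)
The plan is to reduce everything to the single ergodic-theoretic input that two distinct ergodic probability measures are mutually singular, and then to finish by elementary convexity and linear algebra. First I would pass from the cone $\cal M(X)$ to probability measures: dividing each ergodic measure $\mu_i$ by its total mass $\mu_i(X) > 0$ produces ergodic probability measures $\bar\mu_i \in \cal M_1(X)$, and since the $\mu_i$ are pairwise non-proportional the $\bar\mu_i$ are pairwise distinct; moreover linear independence of the family $(\mu_i)$ is equivalent to that of $(\bar\mu_i)$. I would also record that the notion of ergodicity used here (extremality in the cone) coincides with the classical one, namely that every shift-invariant Borel set has measure $0$ or full measure (see \cite{Walters}); this is what licenses the application of the pointwise ergodic theorem below.

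The heart of the argument is to show that distinct $\bar\mu_i$ are mutually singular. Fix $i \neq j$. Since an invariant measure is determined by its values on cylinders and $\bar\mu_i \neq \bar\mu_j$, there is a word $w \in \cal A^*$ with $\bar\mu_i([w]) \neq \bar\mu_j([w])$. The indicator $\mathbf 1_{[w]}$ is continuous (each cylinder is clopen, hence $\mathbf 1_{[w]} \in L^1$), so Birkhoff's pointwise ergodic theorem applies: for $\bar\mu_i$-almost every $x$ the Birkhoff averages $\tfrac1n \sum_{m=0}^{n-1} \mathbf 1_{[w]}(S^m x)$ converge to $\bar\mu_i([w])$, and likewise the $\bar\mu_j$-generic points have limit $\bar\mu_j([w])$. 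As the two limit values differ, the two full-measure sets of generic points are disjoint, giving $\bar\mu_i \perp \bar\mu_j$. Running this simultaneously over the countable separating family $\{\mathbf 1_{[w]} : w \in \cal A^*\}$, I would set
\[
G_i = \Big\{ x \in X : \lim_{n\to\infty}\tfrac1n \textstyle\sum_{m=0}^{n-1} \mathbf 1_{[w]}(S^m x) = \bar\mu_i([w]) \text{ for all } w \in \cal A^* \Big\},
\]
so that, by Birkhoff and a countable intersection, $\bar\mu_i(G_i) = 1$, while the $G_i$ are pairwise disjoint because distinct measures are separated by some cylinder; hence $\bar\mu_j(G_i) = 0$ for $j \neq i$. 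Linear independence then follows at once: if $\sum_i c_i \bar\mu_i = 0$, evaluating this signed measure on $G_j$ leaves only the $j$-th term, so $c_j = c_j\,\bar\mu_j(G_j) = 0$ for every $j$. This proves the first assertion.

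For the second assertion I would invoke the facts already recorded in this section: $\cal M_1(X)$ is compact and equals the closed convex hull of its extremal points, and the extremal points are exactly the ergodic probability measures. Under the finiteness hypothesis these are the finitely many points $\bar\mu_1, \dots, \bar\mu_k$, whose convex hull is automatically compact and therefore closed, so $\cal M_1(X) = \mathrm{conv}(\bar\mu_1, \dots, \bar\mu_k)$. By the first part the $\bar\mu_i$ are linearly independent, hence a fortiori affinely independent, since the only solution of $\sum_i c_i \bar\mu_i = 0$ is $c_i \equiv 0$, which in particular rules out any nontrivial relation with $\sum_i c_i = 0$. The convex hull of affinely independent points is precisely a simplex with those points as vertices, which is the claim. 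The one genuinely non-formal step is the mutual singularity of distinct ergodic measures, and I expect it to be the main obstacle: without it the extremal points could fail to be affinely independent (as for the four vertices of a square) and $\cal M_1(X)$ would only be a polytope rather than a simplex. Everything else is bookkeeping in the linear structure of $\R^{\cal A^*}$.
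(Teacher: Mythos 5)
Your proof is correct; the paper itself offers no argument, stating the proposition as well known with a citation of \cite{Walters}, and your route --- mutual singularity of distinct ergodic measures via Birkhoff generic points separated by a cylinder, then linear (hence affine) independence and the simplex structure by elementary convexity --- is precisely the standard argument underlying that citation. The normalization to probability measures, the countable intersection defining the Borel sets $G_i$, and the identification of the extreme points of $\cal M_1(X)$ with the ergodic probability measures are all handled correctly, so there is nothing to add.
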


It is well known that for any subshift $X \subset \Sigma_\cal A$ the set $\cal M(X)$ of invariant measures is not empty. If $\cal M_1(X)$ consists of a single point (which then must be ergodic), then $X$ is called {\em uniquely ergodic}.

%%%%%%%%%%%%%%

\subsection{Substitutions}
\label{substitutions}

${}^{}$

\begin{defn}
\label{substitution}
(1)
A {\em substitution} $\sigma$ is given by a map
$$\cal A \to \cal A^*, \,\, a_i \mapsto \sigma(a_i) \, .$$
A substitution defines both, an endomorphism of $\cal A^*$, and a continuous map from $\Sigma_\cal A$ to itself which maps $[w]$ to $[\sigma(w)]$.  Both of these maps are also denoted by $\sigma$, and both are summarized under the name of ``substitution''.

\smallskip
\noindent
(2)
If $\cal A$ and $\cal A'$ are two possibly distinct alphabets, then any monoid homomorphism $\sigma: \cal A^* \to \cal A'^*$ is also called a substitution, with the analogous convention for the induced map on $\Sigma_\cal A$.
\end{defn}

A substitution $\sigma: \cal A^* \to \cal A^*$ is called 
{\em everywhere growing} if each $a_i \in \cal A$ satisfies $|\sigma^n(a_i)| \to \infty$ for $n \to \infty$. 

For any substitution $\sigma$ we define the associated 
language $\mathcal L_\sigma \subseteq \cal A^*$ to be the set of factors 
of the words $\sigma^n(a_i)$, with $n\geq 1$ and $a_i\in\mathcal A$.

One defines 
the subshift $X_\sigma \subseteq \Sigma_\cal A$ {\em associated to the substitution $\sigma$} as the set of all $x = \ldots x_{k-1} x_k x_{k+1} \dots\in \Sigma_\cal A$ with the property 
that for any integers $m \geq n$ the word $x_{n} \ldots x_m$ is an element of $\mathcal L_\sigma$.

For any substitution $\sigma: \cal A^* \to \cal A'^*$ 
the non-negative matrix 
$$M_\sigma := (|\sigma(a)|_{a'})_{a' \in \cal A',\,  a \, \in \cal A}$$
is called the {\em incidence matrix} for the substitution $\sigma$ (to be specific: 
$a'$ gives the row 
index, while $a$ gives the column 
index of $M_\sigma$).
The substitution $\sigma$ is called {\em primitive} if $M_\sigma$ is primitive, i.e. there exists an integer 
$k \geq 1$ 
such that every coefficient of 
the power $M_\sigma^k$ is positive.

%%%%%%%%%%%%%%%%%%%%%%%%%%%
\subsection{$S$-adic sequences}
\label{S-adic-intro}

${}^{}$

In $S$-adic theory (see for instance \cite{BD, DLR}) one considers {\em directive sequences} of free monoids $\cal A^*_n$ and of monoid morphisms $\sigma_n: \cal A^*_{n+1} \to \cal A^*_{n}$ (for $n \geq 0$). The substitutions 
$\sigma_n$ 
belong to a given set $S$, which 
in many circumstances is 
assumed to be finite. 

We sometimes 
call $\cal A_n$ the {\em level alphabets}, and $\cal A_0$ the 
{\em base alphabet} of the directive sequence $\sigma$.
We also use the notation $\sigma_{[m, n)} = \sigma_m \circ \sigma_{m+1} \circ \sigma_{m+2} \circ \cdots \circ \sigma_n$ for any $n \geq m \geq 0$.
The directive sequence $\sigma$ is often represented by writing:
$$\sigma = \sigma_0 \circ \sigma_1 \circ \ldots$$

To any such a directive sequence $\sigma$
one associates the language $\cal L_\sigma \subset \cal A_0^*$, defined as the set of factors in $\cal A_0^*$ of the words $\sigma_0 \circ \sigma_1 \circ \ldots \circ \sigma_n(a_i)$, for any $n\geq 0$ and any $a_i\in\mathcal A_{n+1}$.
The subshift $X_\sigma \subseteq \Sigma_{\cal A_0}$ {\em associated to the directive sequence $\sigma$} is the set of all $x = \ldots x_{k-1} x_k x_{k+1} \dots\in \Sigma_{\cal A_0}$ 
such 
that for any two integers $m \geq n$ the word $x_{n} \ldots x_m$ is an element of $\mathcal L_\sigma$. The directive sequence $\sigma = \sigma_0 \circ \sigma_1 \circ \ldots$ is called an {\em $S$-adic expansion} of a subshift $X$ if $X = X_\sigma$ and if $S$ is a set of substitutions which contains every $\sigma_i$ that occurs in $\sigma$.

\medskip

The directive sequence $\sigma$ is called {\em everywhere growing} 
if one has
$$\min_{a_i \in \cal A_n}|\sigma_{[0, n)}(a_i)| \to \infty \qquad \text{for} \qquad n \to \infty \, .$$

One says that $\sigma$ is 
{\em weakly primitive} (or simply {\em primitive} by some authors) if for any $m \geq 1$ there is an integer $n \geq m+1$ such the incidence matrix $M_{\sigma_{[m, n)}}$ is positive. In this case it follows that $\sigma$ is everywhere growing (unless all level alphabets have cardinality 1).

This terminology coincides with that for substitutions introduced in subsection \ref{generalities}: indeed, one recovers the latter as special case of a 
{\em stationary} 
$S$-adic sequence, i.e. all terms $\sigma_n$ in the directive sequence $\sigma$ are equal.

\begin{prop}[\cite{BD}]
\label{weakly-primitive-BD}
For any weakly primitive 
directive sequence $\sigma$ the subshift $X_\sigma$ is minimal.
Furthermore, any minimal subshift $X$ admits an $S$-adic expansion that is weakly primitive.
\qed
\end{prop}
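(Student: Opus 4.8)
The statement has two halves that call for quite different techniques, so I would treat them separately. The first half (weak primitivity $\Rightarrow$ minimality) is a direct combinatorial argument about the language $\cal L_\sigma$; the second half (every minimal subshift has a weakly primitive $S$-adic expansion) is the substantial one, and I would deduce it from the Bratteli--Vershik / Kakutani--Rokhlin description of minimal Cantor systems.

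For the first half the plan is to show that $\cal L_\sigma$ is uniformly recurrent, i.e.\ that every $w \in \cal L_\sigma$ occurs in every sufficiently long word of $\cal L_\sigma$. Fix such a $w$. By definition $w$ is a factor of $\sigma_{[0,m)}(a) = \sigma_0 \circ \cdots \circ \sigma_{m-1}(a)$ for some level $m$ and some letter $a \in \cal A_m$. Weak primitivity, applied at level $m$, produces $n > m$ with $M_{\sigma_{[m,n)}}$ positive, which says precisely that $a$ occurs in $\sigma_{[m,n)}(b)$ for every $b \in \cal A_n$; applying $\sigma_{[0,m)}$ to such an occurrence shows that $w$ is a factor of $\sigma_{[0,n)}(b) = \sigma_{[0,m)}\bigl(\sigma_{[m,n)}(b)\bigr)$ for every $b \in \cal A_n$. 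Now set $\ell = \max_{b \in \cal A_n} |\sigma_{[0,n)}(b)|$. Any word $u \in \cal L_\sigma$ is a factor of some $\sigma_{[0,N)}(c)$ with $N$ as large as we wish (enlarging $N$ by invoking weak primitivity once more), and writing $\sigma_{[0,N)}(c) = \sigma_{[0,n)}(\sigma_{[n,N)}(c))$ exhibits it as a concatenation of level-$n$ blocks $\sigma_{[0,n)}(b_i)$. A factor of such a concatenation of length at least $2\ell$ must contain at least one complete block, hence contains $w$. Thus every word of $\cal L_\sigma$ of length $\geq 2\ell$ contains $w$; since the factors of every point of $X_\sigma$ lie in $\cal L_\sigma$, the word $w$ recurs with gaps $\leq 2\ell$ in every point of $X_\sigma$, and minimality follows from the standard criterion that a subshift in which every factor recurs syndetically in every point is minimal. (The degenerate case in which all level alphabets have cardinality $1$, where $X_\sigma$ is a single periodic point, is trivial; otherwise $\sigma$ is everywhere growing, so $X_\sigma \neq \emptyset$.)

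For the second half I would pass to the topological-dynamical picture. If $X$ is finite it is a single periodic orbit and the expansion is trivial, so assume $X$ is an infinite minimal subshift, hence a Cantor minimal system $(X,S)$. The plan is to build a nested sequence of clopen Kakutani--Rokhlin partitions $\mathcal{P}_0 \succ \mathcal{P}_1 \succ \cdots$ whose bases shrink to a point and whose union generates the topology; this is the standard construction for Cantor minimal systems. Reading off, at each level, how a tower of $\mathcal{P}_{n+1}$ threads through the towers of $\mathcal{P}_n$ yields substitutions $\sigma_n$ between the alphabets $\cal A_n = (\text{set of towers of } \mathcal{P}_n)$, and the fact that the partitions generate the topology gives $X_\sigma \cong X$ for the resulting directive sequence. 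The incidence matrices $M_{\sigma_{[m,n)}}$ are exactly the telescoped transition matrices of the associated Bratteli diagram, so weak primitivity of $\sigma$ is literally the statement that this diagram is \emph{simple}; and the Herman--Putnam--Skau theorem guarantees that the diagram of a minimal system can be chosen simple (equivalently, that minimality of $(X,S)$ is equivalent to simplicity). This is precisely the input that upgrades a bare $S$-adic expansion to a weakly primitive one.

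The main obstacle is entirely in the second half, and it is worth pinpointing why the naive route fails. One is tempted to code $X$ by return words to a nested sequence of factors $w_0, w_1, \ldots$, obtaining substitutions from the decomposition of level-$(n+1)$ return words into level-$n$ ones. But positivity of $M_{\sigma_{[m,n)}}$ demands that \emph{every} level-$n$ return word contain \emph{every} level-$m$ return word, and a short return word (which appears whenever $w_n$ overlaps itself, e.g.\ on long near-periodic factors) destroys this. The Kakutani--Rokhlin / Bratteli--Vershik formulation sidesteps the issue because the tower bases are arbitrary clopen sets rather than return sets of a prescribed word, giving exactly the freedom needed to telescope to strictly positive transition matrices; establishing that such a telescoping exists precisely when the system is minimal is the crux of the argument.
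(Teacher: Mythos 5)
The paper gives no argument of its own for this proposition (it is cited to \cite{BD}), so the relevant comparison is with the proof in that source. Your first half is correct and coincides with the standard argument given there: lift $w$ to a level-$m$ letter, use weak primitivity to make $w$ a factor of $\sigma_{[0,n)}(b)$ for \emph{every} $b$, and observe that any sufficiently long word of ${\cal L}_\sigma$ contains a complete level-$n$ block; your parenthetical step of re-invoking weak primitivity to raise the level $N$ of an arbitrary word $u$ is genuinely needed (low-level images are not automatically factors of high-level ones in an $S$-adic language), and you included it. For the second half you take a genuinely different route: \cite{BD}, following Durand (cf.\ \cite{Dur}), constructs the expansion symbolically via return words and derived sequences with respect to a nested sequence of prefixes, whereas you build Kakutani--Rokhlin partitions and invoke the Herman--Putnam--Skau theorem. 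Your route is legitimate but outsources the combinatorial heart of the matter to HPS as a black box (whose proof is itself the KR-partition construction); the return-word route is self-contained, stays inside symbolic dynamics, and produces the directive sequence directly over the original alphabet.

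Two points need attention. (i) Your stated reason for rejecting the return-word approach --- that short return words caused by self-overlaps of $w_n$ destroy positivity --- is not a real obstruction, and in fact the cited proof goes through exactly this route. In an \emph{infinite} minimal subshift the shortest return time to $u_n$ tends to infinity: if return words of length $\leq C$ existed for arbitrarily large $n$, then arbitrarily long prefixes $u_n$ would have period $\leq C$, producing a periodic point in $X$ and contradicting minimality of an infinite subshift. Hence for each $m$ one may choose $n$ so large that every return word to $u_n$ exceeds the uniform-recurrence constant of level $m$, and positivity of the telescoped incidence matrix follows. (ii) As written, your conclusion is only $X_\sigma \cong X$ (a conjugacy), but the paper's definition of an $S$-adic expansion demands the equality $X = X_\sigma$ over the base alphabet ${\cal A}_0 = {\cal A}$; with an arbitrary KR sequence your level-$0$ alphabet is the set of towers of $\mathcal P_0$, not ${\cal A}$. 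This is fixable --- take $\mathcal P_0$ to be the partition into letter cylinders, or append one more substitution sending each level-$0$ tower to the ${\cal A}$-word it spells --- but it must be said, since otherwise you have proved a statement about the conjugacy class of $X$ rather than the statement in the proposition.
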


If the directive sequence $\sigma$ in the last proposition is stationary (or ``strongly minimal'', see Definition 5.1 of \cite{BD}), 
then one can deduce furthermore that $X_\sigma$ is uniquely ergodic. 

The hypothesis that our directive sequence $\sigma$ is everywhere growing is crucial to everything done in \cite{BHL1}; it will always be assumed. Fortunately this is not really a restriction, as is shown by the following elementary fact
(see Proposition 
5.10 
of \cite{BHL1}):

\begin{lem}
\label{any-lamination}
Let $\cal A$ be a finite alphabet, and let $X \subset \Sigma_\cal A$ be an arbitrary subshift. Then there exists an everywhere growing directive sequence $\sigma$ with 
base alphabet $\cal A_0 = \cal A$ such that
$X = X_\sigma$.
\qed
\end{lem}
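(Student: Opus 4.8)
The plan is to reduce the statement to a purely combinatorial construction and then carry it out by an explicit ``halving'' coding. Write $\mathcal{L}(X) \subseteq \cal A^*$ for the language of $X$, i.e. the set of all finite factors of the biinfinite words in $X$. We use the standard fact that a subshift is recovered from its language, namely $X = \{x \in \Sigma_\cal A : \text{every factor of } x \text{ lies in } \mathcal{L}(X)\}$ (the inclusion ``$\supseteq$'' follows from closedness and shift-invariance of $X$ via a compactness argument). Since by definition $X_\sigma = \{x : \text{every factor of } x \text{ lies in } \mathcal{L}_\sigma\}$, it suffices to build an everywhere growing directive sequence $\sigma = \sigma_0 \circ \sigma_1 \circ \cdots$ with base alphabet $\cal A_0 = \cal A$ whose language satisfies $\mathcal{L}_\sigma = \mathcal{L}(X)$.

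For the construction, let $\cal A_0 := \cal A$, and for each $n \geq 1$ let $\cal A_n$ be an alphabet in bijection with the set of words of $\mathcal{L}(X)$ of length exactly $2^n$; this set is nonempty because a nonempty subshift has factors of every length. Given a length-$2^{n+1}$ word $w = w'w''$ with $|w'| = |w''| = 2^n$, the two halves are factors of $w$, hence lie in $\mathcal{L}(X)$ (the language is closed under factors) and index letters of $\cal A_n$; define $\sigma_n : \cal A_{n+1}^* \to \cal A_n^*$ by $\sigma_n(w) = w'w''$ (a two-letter word over $\cal A_n$), and let $\sigma_0$ send a length-$2$ word of $\mathcal{L}(X)$ to itself, now read as a word over $\cal A_0 = \cal A$. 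Writing $\tau_n := \sigma_0 \circ \cdots \circ \sigma_{n-1} : \cal A_n^* \to \cal A_0^*$ for the composition controlling the everywhere-growing condition, an immediate induction yields $\tau_n(w) = w$ for every $w \in \cal A_n$: indeed $\tau_{n+1}(w) = \tau_n(\sigma_n(w)) = \tau_n(w')\tau_n(w'') = w'w'' = w$.

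With $\tau_n(w) = w$ the three required properties are read off directly: every generator lies in $\mathcal{L}(X)$, so $\mathcal{L}_\sigma \subseteq \mathcal{L}(X)$; one has $\min_{w \in \cal A_n}|\tau_n(w)| = 2^n \to \infty$, so $\sigma$ is everywhere growing; and any $v \in \mathcal{L}(X)$ occurs, since choosing $n$ with $2^n \geq |v|$ and using that $\mathcal{L}(X)$ is extendable (every factor of $X$ extends to arbitrarily long factors) produces a length-$2^n$ word $w \in \mathcal{L}(X)$ having $v$ as a factor, whence $v$ is a factor of $\tau_n(w) = w$ and $\mathcal{L}(X) \subseteq \mathcal{L}_\sigma$. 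Thus $\mathcal{L}_\sigma = \mathcal{L}(X)$ and therefore $X_\sigma = X$. The only genuine tension is that everywhere growing forces all level-$n$ generators to be long, while recovering all of $\mathcal{L}(X)$ seems to demand short building blocks; the halving coding is exactly what resolves this, because the identity $\tau_{n+1}(w) = \tau_n(w')\tau_n(w'')$ automatically writes each long generator as a concatenation of the previous, shorter ones, so no word of the language must ever be tiled ``by hand.'' I expect no further obstacle: the proof uses only that a nonempty subshift has a factorial, extendable language and is recovered from it.
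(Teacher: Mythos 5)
Your proof is correct, and it should be judged on its own merits: the paper itself gives no argument for this lemma, deferring instead to Proposition 5.10 of \cite{BHL1}. Your construction stands up to scrutiny. The language $\mathcal{L}(X)$ of a (nonempty, closed, shift-invariant) subshift is factorial and extendable and recovers $X$ by exactly the compactness/closedness argument you indicate; the halving substitutions are well defined because the two halves of a word of $\mathcal{L}(X)$ again lie in $\mathcal{L}(X)$; the induction $\tau_{n+1}(w)=\tau_n(w')\,\tau_n(w'')=w'w''=w$ is sound; and your three observations (factoriality gives $\mathcal{L}_\sigma\subseteq\mathcal{L}(X)$, the identity $|\tau_n(w)|=2^n$ gives the everywhere growing property, extendability gives $\mathcal{L}(X)\subseteq\mathcal{L}_\sigma$) together yield $X_\sigma=X$. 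This ``words of length $2^n$ as level-$n$ letters'' coding is essentially the standard device for such realization statements and is in the same spirit as the graph-tower construction of \cite{BHL1}, so you have in effect supplied the missing proof rather than diverged from it. One cosmetic remark: your level alphabets $\mathcal{A}_n$ for $n\geq 1$ have cardinality equal to the number of length-$2^n$ factors of $X$, which in general differs from $\mathrm{card}\,\mathcal{A}$; this is permitted, since the lemma only prescribes the base alphabet $\mathcal{A}_0=\mathcal{A}$, but it is worth noting that your sequence therefore need not have finite tower dimension in the sense of the paper, and no claim of that kind is made in the lemma.
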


The following seems to be well known (see \cite{BD}, Remark 5 and \cite{Dur}); a proof is provided through Corollary 
2.11 
of \cite{BHL1}:

\begin{fact}
\label{tower-bound}
(1)
For any directive sequence $\sigma$, 
where all level alphabets $\cal A_n$ are equal to some fixed 
alphabet $\cal A$ of cardinality $d \geq 1$, the number of distinct ergodic probability measures carried by the associated subshift $X_\sigma \subset \Sigma_\cal A$ is bounded above by $d$.

\smallskip
\noindent
(2)
In particular, the subset $\cal M_1(X_\sigma)$ of probability measures on $X_\sigma$ is a simplex of dimension $\dim \cal M_1(X_\sigma) \leq d-1$.
\qed
\end{fact}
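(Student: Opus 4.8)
The plan is to encode each invariant measure by its tower-base frequencies at every level of the directive sequence, to note that passing from level $n+1$ to level $n$ is governed by the incidence matrix $M_{\sigma_n}$, and then to bound the resulting inverse limit by linear algebra. The hypothesis that all level alphabets equal the fixed alphabet $\cal A$ of cardinality $d$ is used precisely here: it forces every $M_{\sigma_n}$ to be a non-negative $d\times d$ matrix, so that the inverse limit is of a system of constant dimension $d$, which is what produces the bound $d$.

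Concretely, for $\mu\in\cal M(X_\sigma)$ and $n\geq 0$ let $\vec f_n(\mu)\in\R^{d}_{\geq 0}$ have as its $a$-coordinate the $\mu$-measure of the set of points of $X_\sigma$ at which a copy of the level-$n$ block $\sigma_0\circ\cdots\circ\sigma_{n-1}(a)$ begins (the base of the level-$n$ tower of $a$). Decomposing $\sigma_0\circ\cdots\circ\sigma_{n}(b)=\big(\sigma_0\circ\cdots\circ\sigma_{n-1}\big)\big(\sigma_n(b)\big)$ into level-$n$ blocks and using shift-invariance of $\mu$, one reads off the transition rule $\vec f_n(\mu)=M_{\sigma_n}\,\vec f_{n+1}(\mu)$ for every $n$. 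Hence $\Phi:\mu\mapsto(\vec f_n(\mu))_{n\geq 0}$ is a linear map from $\cal M(X_\sigma)$ into the inverse limit $\hat V:=\varprojlim(\R^{d},M_{\sigma_n})=\{(\vec v_n)_n : \vec v_n=M_{\sigma_n}\vec v_{n+1}\}$.

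The step I expect to be the main obstacle is the faithfulness of $\Phi$, i.e. that $\mu$ is determined by the whole sequence $(\vec f_n(\mu))_n$. This is exactly where \emph{everywhere growing} is needed: since $\min_{a}|\sigma_0\circ\cdots\circ\sigma_{n-1}(a)|\to\infty$, the partitions of $X_\sigma$ into the floors of the level-$n$ towers refine to a generating sequence, and therefore every cylinder measure $\mu([w])$ is recovered as a limit built from the tower data. This is precisely the content of Corollary 2.11 of \cite{BHL1}; granting it, $\Phi$ is injective, so $\cal M(X_\sigma)$ embeds linearly into $\hat V$.

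It remains to prove the purely linear-algebraic fact that $\dim_\R\hat V\leq d$. Writing $q_n:\hat V\to\R^{d}$ for the $n$-th projection and $U_n:=q_n(\hat V)$, the identity $M_{\sigma_n}\circ q_{n+1}=q_n$ gives $M_{\sigma_n}(U_{n+1})=U_n$; thus the dimensions $\dim U_n\leq d$ are non-decreasing in $n$ and stabilize to some $r\leq d$ from an index $N$ onwards, with each $M_{\sigma_n}:U_{n+1}\to U_n$ an isomorphism for $n\geq N$. A short diagram chase then shows $q_N$ is injective on $\hat V$: if $\vec v_N=0$ then $\vec v_n=0$ for all $n\geq N$ by these isomorphisms, while $\vec v_n=M_{\sigma_n}\cdots M_{\sigma_{N-1}}\,\vec v_N=0$ for $n<N$. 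Consequently $\dim\hat V=\dim U_N\leq d$. Combining this with the injectivity of $\Phi$ yields $\dim_\R\operatorname{span}\cal M(X_\sigma)\leq d$. By Proposition \ref{Fer-Mont} the ergodic measures, being pairwise non-proportional, are linearly independent, so there are at most $d$ of them; this proves (1), and then $\cal M_1(X_\sigma)$ — the base of the cone $\cal M(X_\sigma)$ cut out by total mass $1$ — is a simplex of dimension at most $d-1$, giving (2).
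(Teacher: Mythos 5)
Your linear-algebra half is correct: the inverse limit $\hat V=\{(\vec v_n)_n : \vec v_n=M_{\sigma_n}\vec v_{n+1}\}$ of a sequence of linear self-maps of $\R^d$ has dimension at most $d$, and your stabilization argument for the injectivity of $q_N$ is clean. The gap is in the other half, the construction of $\Phi$. The quantity you call $\vec f_n(\mu)(a)$ --- ``the $\mu$-measure of the set of points at which a copy of the block $\sigma_{[0,n)}(a)$ begins'' --- is just the cylinder measure $\mu\bigl([\sigma_{[0,n)}(a)]\bigr)$, and for cylinder measures the transition rule $\vec f_n(\mu)=M_{\sigma_n}\vec f_{n+1}(\mu)$ is false: already for Thue--Morse ($a\mapsto ab$, $b\mapsto ba$), Example \ref{formula-applied} gives $\mu([a])=\tfrac12$, whereas $|\sigma(a)|_a\,\mu([ab])+|\sigma(b)|_a\,\mu([ba])=\tfrac13+\tfrac13=\tfrac23$. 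What your block-decomposition argument actually requires is the measure of the \emph{base of the level-$n$ Kakutani--Rokhlin tower over $a$}, i.e.\ of the set of points whose desubstitution into level-$n$ blocks has a block of type $a$ starting at the origin; for that set to be well defined one needs a.e.-unique desubstitution at every level (recognizability), which an arbitrary everywhere growing directive sequence need not have. The obstruction is real, not cosmetic: for the substitution of Subsection \ref{first-example} ($a\mapsto acbca$, $b\mapsto ba$, $c\mapsto cc$) the ergodic measure $\mu_2$ is the Dirac mass on the shift-fixed point $\ldots ccc\ldots$, and the transition rule together with non-negativity forces $\vec f_n(\mu_2)(c)=2^{-n}$, a value which is not the $\mu_2$-measure of \emph{any} subset of $X_\sigma$, since a Dirac mass only takes the values $0$ and $1$. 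So $\Phi$ as described does not exist, and invoking Corollary 2.11 of \cite{BHL1} for ``faithfulness'' cannot repair a map that is not yet defined --- note also that this corollary is exactly what the paper cites as its proof of the Fact, so at that point the argument would in any case become circular.

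The repair is to run the correspondence in the opposite direction, which is what the paper's machinery does and which needs no recognizability. Theorem \ref{thm1}(2) says every invariant measure equals ${\frak m}(\bvec v)$ for \emph{some} $\sigma$-compatible vector tower $\bvec v$ (surjectivity of towers onto measures is the hard content of \cite{BHL1}; note that in the example above the tower $(0,0,2^{-n})_n$ exists perfectly well as a tower, it just is not realized by measures of subsets), and Theorem \ref{thm1}(3) says ${\frak m}$ is linear for non-negative combinations. Given distinct ergodic probability measures $\mu_1,\dots,\mu_k$, choose towers $\bvec v_i$ with ${\frak m}(\bvec v_i)=\mu_i$; these lie in your $\hat V$, so if $k>d$ they admit a non-trivial dependency $\sum c_i\bvec v_i=0$. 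Moving the negative terms to the other side makes both sides non-negative combinations of compatible towers, so ${\frak m}$ applies and yields a non-trivial linear relation among the $\mu_i$, contradicting Proposition \ref{Fer-Mont}. This proves (1), and (2) follows as you argued. Your inverse-limit dimension bound is precisely the ingredient this argument needs, so your proposal is salvageable by replacing the ill-defined $\Phi$ with the surjection ${\frak m}$.
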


%%%%%%%%%%%%%%%%%%%%

\section{Results from \cite{BHL1}}
\label{from-BHL1} 

\subsection{The general setting}

${}^{}$

Throughout this section we assume that $\sigma = \sigma_0 \circ \sigma_1 \circ \sigma_2 \circ \cdots$ is a directive sequence of substitutions $\sigma_n: \cal A^*_{n+1} \to \cal A^*_n$ as set up in the previous section. We also use the notation $\sigma_{[m, n)} = \sigma_m \circ \sigma_{m+1} \circ \sigma_{m+2} \circ \cdots \circ \sigma_n$ for any $n \geq m \geq 0$, and $M_n = M_{\sigma_n}$ and $M_{[m,n)} = M_{\sigma_{[m, n)}}$ for the associated incidence matrices.

Let $\cal A_1 = \{a_1, \ldots, a_d\}$ and $\cal A_2 = \{a'_1, \ldots, a'_{d'}\}$ be two alphabets, and let $\sigma: \cal A^*_2 \to \cal A_1^*$ be a substitution. We consider vectors $\vec v_1 = (\vec v_1(a))_{a \in \cal A_1}$ and $\vec v_2 = (\vec v_2(a'))_{a' \in \cal A_2}$ with real coordinates $\vec v_1(a) \geq 0$ and $\vec v_2(a') \geq 0$, and we say that $\vec v_1$ and $\vec v_2$ are {\em $\sigma$-compatible} if one has $\vec v_1 = M_{\sigma} \vec v_2$, where $M_{\sigma}$ denotes the incidence matrix of $\sigma$.

\begin{defn}
\label{vector-sequence}
Let $\sigma = \sigma_0 \circ \sigma_1 \circ \sigma_2 \circ \cdots$ be a directive sequence, and let 
$\bvec v = (\vec v_n)_{n \in \N \cup \{0\}}$ 
be a family of non-negative vectors $\vec v_n = (\vec v_n(a))_{a \in \cal A_n}$. We say that $\bvec v$ is a {\em $\sigma$-compatible vector tower} if for any $n \geq 0$ the vectors $\vec v_n$ and $\vec v_{n+1}$ are $\sigma_n$-compatible.
\end{defn}

We notice that there is a natural addition for $\sigma$-compatible vector towers, and similarly an external multiplication with non-negative scalars $\lambda \in \R_{\geq 0}$.
We are now able to state the main result of 
of our previous paper, 
translated properly into $S$-adic terminology:

\begin{thm}[{\cite{BHL1}}]
\label{thm1}
Let $\sigma = \sigma_0 \circ \sigma_1 \circ \sigma_2 \circ \cdots$ be an everywhere growing directive sequence with associated subshift $X_\sigma$. Let $\cal M := \cal M(X_\sigma)$ denote the set of invariant measures on $X_\sigma$, and let $\cal V = \cal V(\sigma)$ denote the set of $\sigma$-compatible vector towers.
\begin{enumerate}
\item
Every $\sigma$-compatible vector tower $\bvec v$ determines an invariant measure $\mu^{\tiny\bvec v}$ on $X_\sigma$.
\item
Conversely, every invariant measure $\mu$ on $X_\sigma$ is given via $\mu = \mu^{\tiny\bvec v}$ by some $\sigma$-compatible vector tower $\bvec v$.
\item
The issuing map 
$
{\frak m}
: \cal V \to \cal M, \bvec v \mapsto \mu^{\tiny\bvec v}$ is linear (with respect to linear combinations with non-negative scalars).
\item
For any word $w \in \cal A_0^*$
and any $\sigma$-compatible vector tower $\bvec v = (\vec v_n)_{n \in \N \cup \{0\}}$, 
with $\vec v_n = (\vec v_n(a))_{a \in \cal A_n}$, 
the sequence of sums 
$$\sum_{a \,\in \cal A_n} \vec v_n(a) \, |\sigma_{[0,n)}(a)|_w$$
is bounded above and increasing, 
and 
one has:
\begin{equation}
\label{approximation-sum}
\mu^{\tiny\bvec v}(
[w]) =\lim_{n \to \infty} \sum_{a \,\in \cal A_n} \vec v_n(a) 
\, 
|\sigma_{[0,n)}(a)|_w
\end{equation}
\end{enumerate}
\qed
\end{thm}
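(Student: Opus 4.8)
The plan is to take the explicit formula in item (4) as the \emph{definition} of the candidate measure and then verify, in order, that the defining limit exists (item 4), that it assembles into a genuine invariant measure (item 1), that the assignment is linear (item 3), and finally that it is surjective (item 2). Write $S_n(w) = \sum_{a \in \cal A_n}\vec v_n(a)\,|\sigma_{[0,n)}(a)|_w$ and $L_n(a) = |\sigma_{[0,n)}(a)|$. The key computation is the monotonicity $S_{n+1}(w)\ge S_n(w)$: expanding $\sigma_{[0,n+1)}(a') = \sigma_{[0,n)}(\sigma_n(a'))$ as a concatenation of blocks $\sigma_{[0,n)}(b)$ with $b\in\cal A_n$ gives $|\sigma_{[0,n+1)}(a')|_w \ge \sum_{b}|\sigma_n(a')|_b\,|\sigma_{[0,n)}(b)|_w$, the deficit being the occurrences of $w$ straddling block boundaries; substituting this into $S_{n+1}(w)$ and using the $\sigma_n$-compatibility $\vec v_n = M_{\sigma_n}\vec v_{n+1}$ collapses the right-hand side to $S_n(w)$. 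For the upper bound, the same compatibility (now read as $\langle \vec v_{n+1}, M_{\sigma_n}^{T}\vec L_n\rangle = \langle \vec v_n, \vec L_n\rangle$) shows that $\sum_a \vec v_n(a)L_n(a)$ is \emph{independent of $n$}, equal to $T := \sum_{a\in\cal A_0}\vec v_0(a)$; since distinct occurrences of $w$ occupy distinct positions, $S_n(w)\le \sum_a\vec v_n(a)L_n(a) = T$. Hence $S_n(w)$ increases to a finite limit, which I take as $\mu^{\bvec v}([w])$.

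To see that these cylinder values define an invariant Borel measure supported on $X_\sigma$, I would check the consistency relations $\mu^{\bvec v}([w]) = \sum_{c\in\cal A_0}\mu^{\bvec v}([wc]) = \sum_{c\in\cal A_0}\mu^{\bvec v}([cw])$, together with non-negativity and with $\mu^{\bvec v}([w]) = 0$ for $w\notin\cal L_\sigma$ (immediate, as then $|\sigma_{[0,n)}(a)|_w = 0$); by the standard cylinder-extension theorem and the characterization recalled via \cite{Walters} these yield a unique shift-invariant measure. At finite level $\sum_c |\sigma_{[0,n)}(a)|_{wc} = |\sigma_{[0,n)}(a)|_w - \varepsilon_n(a)$, where $\varepsilon_n(a)\in\{0,1\}$ records whether $w$ is a suffix of $\sigma_{[0,n)}(a)$, so $\sum_c S_n(wc) = S_n(w) - \sum_a\vec v_n(a)\varepsilon_n(a)$. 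Here the \emph{everywhere growing} hypothesis is decisive: from $T = \sum_a\vec v_n(a)L_n(a) \ge (\min_a L_n(a))\sum_a\vec v_n(a)$ one gets $\sum_a\vec v_n(a)\le T/\min_a L_n(a)\to 0$, so the correction vanishes in the limit and the relation holds; the computation for $\sum_c\mu^{\bvec v}([cw])$ is identical, and is exactly shift-invariance since $\bigsqcup_c[cw] = S^{-1}[w]$.

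Linearity (item 3) is then immediate: compatibility is a linear condition, each $S_n(w)$ is linear in $\bvec v$, and a non-negative combination of the $\mu^{\bvec v}$ agrees on every cylinder with the measure attached to the combined tower, hence equals it. For surjectivity I would first normalize to $T=1$ and topologize the set $\cal V_1$ of such towers by the product topology on the coordinates $\vec v_n(a)$; the bound $\vec v_n(a)\le 1/L_n(a)$ makes $\cal V_1$ a closed subset of a product of compact intervals, hence compact. The tail estimate $0\le \mu^{\bvec v}([w]) - S_n(w)\le (|w|-1)\sum_{b\in\cal A_n}\vec v_n(b)\le (|w|-1)/\min_b L_n(b)$, which is \emph{uniform in $\bvec v$}, shows that $\bvec v\mapsto\mu^{\bvec v}([w])$ is a uniform limit of continuous functions, so $\frak m:\cal V_1\to\cal M_1(X_\sigma)$ is continuous. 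Its image is therefore a compact convex subset of the finite simplex $\cal M_1(X_\sigma)$ (Fact \ref{tower-bound}); by Proposition \ref{Fer-Mont} it coincides with $\cal M_1(X_\sigma)$ as soon as it contains every extreme point, i.e. every ergodic measure, so surjectivity reduces to realizing a single ergodic $\mu$.

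This last step is where I expect the main difficulty. The natural construction is via Kakutani--Rokhlin towers: for each level $n$ one seeks a partition of $X_\sigma$ into columns indexed by $a\in\cal A_n$, the column over a base $B_n^a$ having height $L_n(a)$ and consisting of the successive shifts of a point sitting at the left end of a level-$n$ block $\sigma_{[0,n)}(a)$. Setting $\vec v_n(a) := \mu(B_n^a)$, the way the level-$(n+1)$ partition refines the level-$n$ one (each $\sigma_n(a')$ prescribing where the level-$n$ bases sit inside a level-$(n+1)$ column) yields exactly $\vec v_n(b) = \sum_{a'}|\sigma_n(a')|_b\,\vec v_{n+1}(a')$, i.e. $\sigma_n$-compatibility, while counting occurrences of $w$ inside each column against the negligible boundary contributions recovers $\mu = \mu^{\bvec v}$. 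The crux, and the genuinely hard point, is the \emph{existence} of such a coherent sequence of partitions: that $\mu$-almost every point admits an essentially unique level-$n$ desubstitution, so that $B_n^a$ is well defined up to $\mu$-measure zero. This is a measure-theoretic recognizability statement; it may fail pointwise (for instance on periodic orbits) but holds $\mu$-almost everywhere for everywhere growing sequences, and it is precisely this structural input, packaged in the graph-tower machinery of \cite{BHL1}, that carries item (2).
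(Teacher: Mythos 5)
Two preliminary remarks. First, the paper itself contains no proof of Theorem \ref{thm1}: it is imported verbatim from Theorem 2.9 and Remark 9.5 of \cite{BHL1} (via the graph-tower/$S$-adic dictionary of Section 3 of that paper), where measures are constructed from weight functions on level graphs — see also the proof of Corollary \ref{cylinder-computation}. So your proposal is being compared against that source rather than against anything in this text. Second, your treatment of items (1), (3) and (4) is correct and self-contained, and in fact more elementary than the graph-tower machinery: the monotonicity of $S_n(w)$ via the block decomposition and $\sigma_n$-compatibility, the invariance of $\sum_a \vec v_n(a)\,|\sigma_{[0,n)}(a)|$ in $n$, the consistency relations with error term $\sum_a \vec v_n(a)\,\varepsilon_n(a) \le T/\min_a |\sigma_{[0,n)}(a)| \to 0$ (this is exactly where the everywhere-growing hypothesis must enter), and the extension to a shift-invariant Borel measure supported on $X_\sigma$ are all sound. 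The first half of your item (2) — compactness of $\cal V_1$, the uniform tail estimate giving continuity of ${\frak m}$, and the reduction of surjectivity to realizing each ergodic measure — is also fine, with one caveat: Fact \ref{tower-bound} assumes all level alphabets have a fixed cardinality $d$, which Theorem \ref{thm1} does not; you should instead invoke the statement in Section \ref{generalities} that $\cal M_1(X)$ is compact and is the closed convex hull of its extremal points, so that Krein--Milman closes the argument in general.

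The genuine gap is the step you yourself flag: realizing a single ergodic measure by a vector tower. Your proposed route — Kakutani--Rokhlin columns with bases $B_n^a$, setting $\vec v_n(a):=\mu(B_n^a)$, justified by ``measure-theoretic recognizability holding $\mu$-almost everywhere for everywhere growing sequences'' — is not merely hard, it is false in this generality, and the paper's own Section \ref{first-example} provides the counterexample. There $\sigma: a\mapsto acbca,\ b\mapsto ba,\ c\mapsto cc$ is everywhere growing, and the ergodic measure $\mu_2$ is the Dirac mass at the shift-fixed point $\ldots ccc \ldots$. For this $\mu_2$ every measurable set has measure $0$ or $1$; since the point is shift-fixed, no column of height $\ge 2$ can have pairwise disjoint levels each containing it, so for large $n$ there is no partition into columns of heights $|\sigma^n(a_i)|$ at all, and a fortiori no bases with $\sum_a \mu_2(B_n^a)\,|\sigma^n(a)| = 1$. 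Recognizability fails precisely at the one point carrying all the mass. Yet the theorem holds there: the compatible tower $\vec v_n = {}^t(0,0,2^{-n})$ realizes $\mu_2$, so the tower coordinates cannot in general be interpreted as measures of bases of a partition. This is exactly why the paper remarks in Section \ref{first-example} that the methods of \cite{BKMS}, which do rely on such tower partitions, break down for substitutions with periodic leaves, whereas the technology of \cite{BHL1} does not: the proof of item (2) in \cite{BHL1} pushes an invariant measure onto weights by a limiting procedure with no recognizability input. As written, your item (2) is therefore not a proof, and the missing step is not a technicality that the stated a.e.-recognizability claim could repair.
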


This is precisely 
the statement of Theorem 2.9 of \cite{BHL1}, except that the ``increasing'' property from (4) has been shown in Remark 9.5 of \cite{BHL1}. The canonical translation from the more general language of graph towers and vectors towers used in \cite{BHL1} into the traditional $S$-adic setting for subshifts in symbolic dynamics is explained in detail in section 3 of \cite{BHL1}.

\medskip

For any of the level alphabets $\cal A_n$ of a directive sequence $\sigma$ as above we consider the vector space $\R^{\cal A_n}$ and its non-negative cone $\R_{\geq 0}^{\cal A_n}$, as well as its image $\cal C_0^n := M_{[0,n)}(\R^{\cal A_n})$ in the base space $\R^{\cal A_0}$. From Definition \ref{vector-sequence} we obtain a canonical linear map 
$$
{\frak m}_0: \cal V(\sigma) \to \R^{\cal A_0}\, , \,\, 
\bvec v = (\vec v_n)_{n \in \N \cup \{0\}} \mapsto \vec v_0
\, ,$$
and it follows 
(see Proposition 10.2 (2) of \cite{BHL1}) 
that its image is equal to the nested intersection $\cal C_\infty := \bigcap \, \cal C_0^n$ of the cones $\cal C_0^n$. This gives (see \cite{BHL1}, Proposition 10.2 (1)):

\begin{lem}
\label{nested}
The map $\zeta: \cal M(X_\sigma) \to \R^{\cal A_0}, \, \mu \mapsto (\mu([a])_{a \in \cal A_0}$ satisfies ${\frak m}_0 = \zeta \circ {\frak m}$ and 
thus 
$\cal C_\infty = \zeta({\frak m}(\cal V(\sigma)))$. In particular, 
$\dim \cal C_\infty$ 
is a lower bound to the number of distinct ergodic probability measures on $X_\sigma$.
\qed
\end{lem}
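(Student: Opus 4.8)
\textbf{Proof plan for Lemma \ref{nested}.}

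The plan is to unwind the definitions and reduce the claim to the content already packaged into Theorem \ref{thm1} together with the description of $\frak{m}_0$. First I would verify the factorization ${\frak m}_0 = \zeta \circ {\frak m}$. Given a $\sigma$-compatible vector tower $\bvec v = (\vec v_n)$, the map ${\frak m}$ sends it to the invariant measure $\mu^{\tiny\bvec v}$, and $\zeta$ then records the vector $(\mu^{\tiny\bvec v}([a]))_{a \in \cal A_0}$. On the other hand ${\frak m}_0$ sends $\bvec v$ directly to $\vec v_0$. So the identity ${\frak m}_0 = \zeta \circ {\frak m}$ amounts to the coordinatewise assertion that $\mu^{\tiny\bvec v}([a]) = \vec v_0(a)$ for each single letter $a \in \cal A_0$. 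This is exactly the $w = a$, $n = 0$ instance of the approximation formula \eqref{approximation-sum}: when $w = a$ is a single letter and $n = 0$, the sum $\sum_{a' \in \cal A_0} \vec v_0(a') |\sigma_{[0,0)}(a')|_a$ collapses, since $\sigma_{[0,0)}$ is the identity and $|a'|_a = \delta_{a a'}$, leaving precisely $\vec v_0(a)$. Because the sequence in \eqref{approximation-sum} is increasing and already attains the value $\vec v_0(a)$ at $n = 0$, monotonicity forces the limit to equal $\vec v_0(a)$, and the factorization follows.

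Next I would deduce the image identity $\cal C_\infty = \zeta({\frak m}(\cal V(\sigma)))$. By Theorem \ref{thm1}(2) the map ${\frak m}: \cal V(\sigma) \to \cal M(X_\sigma)$ is surjective, so ${\frak m}(\cal V(\sigma)) = \cal M(X_\sigma)$ and hence $\zeta({\frak m}(\cal V(\sigma))) = \zeta(\cal M(X_\sigma))$. Combining this with the factorization just established gives
\[
\zeta({\frak m}(\cal V(\sigma))) = {\frak m}_0(\cal V(\sigma)),
\]
and by the cited Proposition 10.2(2) of \cite{BHL1} the image of ${\frak m}_0$ is exactly the nested intersection $\cal C_\infty = \bigcap_n \cal C_0^n$. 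Thus $\cal C_\infty = \zeta({\frak m}(\cal V(\sigma)))$, as claimed. I would keep in mind here that $\cal C_\infty$ is a convex cone (a nested intersection of the cones $\cal C_0^n$), so $\dim \cal C_\infty$ refers to the dimension of its linear span.

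Finally I would establish the lower bound on the number of ergodic probability measures. The idea is that $\zeta$ restricted to $\cal M(X_\sigma)$ has image $\cal C_\infty$, so the dimension of $\cal C_\infty$ cannot exceed the dimension of the linear span of $\cal M(X_\sigma)$. By Proposition \ref{Fer-Mont}, the ergodic measures span $\cal M(X_\sigma)$ and, being pairwise non-proportional extremals, are linearly independent; hence the number of ergodic probability measures equals the dimension of the span of $\cal M(X_\sigma)$, which is at least $\dim \cal C_\infty$. The main point requiring care is that $\zeta$ is a \emph{linear} map, so it cannot increase dimension: $\dim \cal C_\infty = \dim \zeta(\cal M(X_\sigma)) \le \dim \operatorname{span} \cal M(X_\sigma)$. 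This inequality is the crux, and it is where the linearity of ${\frak m}$ (Theorem \ref{thm1}(3)) and of $\zeta$ are both used. I expect the only genuinely delicate bookkeeping to be the interplay between the cone $\cal C_\infty$ and its linear span when passing from the image identity to the dimension count; the rest is a direct assembly of the surjectivity and linearity statements already available.
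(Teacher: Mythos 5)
Your overall assembly is the right one, and it is worth noting that the paper itself gives no argument here: it simply quotes Proposition 10.2 of \cite{BHL1}. Your steps 2 and 3 correctly combine the surjectivity of ${\frak m}$ (Theorem \ref{thm1}(2)), the cited description of the image of ${\frak m}_0$ as the nested intersection $\cal C_\infty$, the linearity of $\zeta$, and Proposition \ref{Fer-Mont}; the only harmless imprecisions there are that the case of infinitely many ergodic measures should be dismissed as trivial, and that for finitely many of them one needs the ergodic decomposition (the preliminaries' statement that $\cal M_1(X)$ is the closed convex hull of its extremal points) to know that the ergodic measures actually span $\cal M(X_\sigma)$.

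However, your step 1, the factorization ${\frak m}_0 = \zeta \circ {\frak m}$, contains a genuinely invalid inference. You argue that since the sequence in \eqref{approximation-sum} is increasing and its $n = 0$ term equals $\vec v_0(a)$, ``monotonicity forces the limit to equal $\vec v_0(a)$''. Monotonicity gives only $\mu^{\tiny\bvec v}([a]) \geq \vec v_0(a)$: an increasing sequence may well exceed its first term in the limit, so as written this step fails. What actually makes the claim true is $\sigma$-compatibility, which shows the sequence is \emph{constant} for single-letter words: by the definition of the incidence matrix, the $n$-th term is
$$\sum_{b \,\in \cal A_n} \vec v_n(b)\, |\sigma_{[0,n)}(b)|_a \; = \; \bigl( M_{[0,n)} \vec v_n \bigr)_a \, ,$$
and iterating the compatibility relation $\vec v_m = M_{\sigma_m}\vec v_{m+1}$ gives $M_{[0,n)}\vec v_n = \vec v_0$ for every $n \geq 0$. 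Hence every term, not just the first, equals $\vec v_0(a)$, so the limit is $\vec v_0(a)$, which is exactly what the factorization requires; the ``increasing'' property from Theorem \ref{thm1}(4) is not needed at all. With this one-line repair your proof is complete and faithful to what the cited result of \cite{BHL1} encodes.
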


Of special interest are directive sequences 
where every level alphabet $\cal A_n$ has the same cardinality $d \geq 1$, so that we can postulate them to be equal to $\cal A_n := \cal A = \{a_1 , \ldots, a_d\}$. 
In this case we say 
that $\sigma$ is a directive sequence {\em over $\cal A$}, and we say that $\sigma$ is of {\em tower dimension} $\dim \sigma = d$.

Examples are stationary sequences, or sequences derived through telescoping from directive sequences that have {\em finite tower dimension} $d$: the number $d$ is the inferior limit of the sequence of the $\,\card \cal A_n \in \N$.
(We believe that the notion of ``finite tower dimension'' is related or perhaps even equivalent to the condition ``finite rank'' as defined through the Bratteli-Vershik setting.)

%%%%%%%
\subsection{Application to substitutions}

${}^{}$

As pointed out in section \ref{prelims}, for any substitution $\sigma: \cal A^* \to \cal A^*$ the stationary directive sequence $\sigma_0\circ \sigma_1 \circ \sigma_2 \circ \ldots$, with $\sigma_n = \sigma$ for all $n \geq 0$, has as associated subshift the substitution subshift $X_\sigma$. This gives the possibility to interpret a compatible vector tower as infinite sequence of vectors in $\R_{\geq 0}^d$ (with 
$d = {\card \, \cal A}$), obtained from each other through iteration of the linear map $M_\sigma$. This observation has been used to derive in Theorem 10.8 of \cite{BHL1} the following result, which is a slight improvement of a result of 
Bezuglyi, Kwiatkowski, Medynets and Solomyak 
obtained in \cite{BKMS}:

\begin{prop}
\label{mieux-que-moulinette}
For any everywhere growing substitutions $\sigma$ the set of ergodic measures on the substitution subshift $X_\sigma$ is in 1-1 relation with the set of 
extremal vectors in the cone 
$$\cal C_\infty = \bigcap \{ M_\sigma^n (\R_{\geq 0}) \mid n \geq 1\}\, .$$ 
The latter are also the
non-negative extremal eigenvectors of 
a suitable positive power
$M^k_\sigma$ 
(for example $k = (\rm{card} \, A)!$ would do, see Appendix 11.3 of \cite{BHL1}).
\qed
\end{prop}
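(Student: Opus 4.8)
The plan is to specialize the abstract machinery of Theorem \ref{thm1} and Lemma \ref{nested} to the stationary directive sequence $\sigma_n = \sigma$, and to upgrade the merely surjective map ${\frak m}$ into a genuine isomorphism of cones onto $\cal C_\infty$. In the stationary case a $\sigma$-compatible vector tower is a sequence $\bvec v = (\vec v_n)_{n \geq 0}$ of non-negative vectors with $\vec v_n = M_\sigma \vec v_{n+1}$; hence $\vec v_m = M_\sigma^n \vec v_{m+n}$ for all $m, n$, so every entry of the tower lies in $\cal C_\infty = \bigcap_{n \geq 1} M_\sigma^n(\R_{\geq 0}^{\cal A})$. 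By Theorem \ref{thm1} the map ${\frak m}: \cal V(\sigma) \to \cal M(X_\sigma)$ is a surjective morphism of cones, and by Lemma \ref{nested} the base-evaluation ${\frak m}_0: \bvec v \mapsto \vec v_0$ has image exactly $\cal C_\infty$ and factors as ${\frak m}_0 = \zeta \circ {\frak m}$ with $\zeta(\mu) = (\mu([a]))_{a \in \cal A}$. I would first prove that ${\frak m}_0$ is injective; granting this, the factorization ${\frak m}_0 = \zeta \circ {\frak m}$ forces ${\frak m}$ to be injective as well, so ${\frak m}$ is a cone isomorphism and $\zeta = {\frak m}_0 \circ {\frak m}^{-1}: \cal M(X_\sigma) \to \cal C_\infty$ is one too. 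Since a cone isomorphism carries extremal rays bijectively onto extremal rays, and since by Proposition \ref{Fer-Mont} the ergodic measures are exactly the extremal rays of $\cal M(X_\sigma)$, this already yields the claimed $1$–$1$ correspondence between ergodic measures and extremal vectors of $\cal C_\infty$.

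The engine for injectivity is that $M_\sigma$ acts bijectively on $\cal C_\infty$. Let $V_\infty = \bigcap_n M_\sigma^n(\R^{\cal A})$ be the eventual column space; the subspaces $M_\sigma^n(\R^{\cal A})$ form a non-increasing chain, hence stabilize, and $M_\sigma$ restricts to an automorphism of $V_\infty$. Because $\cal C_\infty \subseteq M_\sigma^n(\R^{\cal A})$ for every $n$, we have $\cal C_\infty \subseteq V_\infty$, so $M_\sigma$ is injective on $\cal C_\infty$; moreover, for $v \in \cal C_\infty$ the unique $M_\sigma$-preimage $w \in V_\infty$ is forced to lie in $M_\sigma^n(\R_{\geq 0}^{\cal A})$ for all large $n$ (by uniqueness of preimages in $V_\infty$), hence $w \in \cal C_\infty$, giving $M_\sigma(\cal C_\infty) = \cal C_\infty$. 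Injectivity of $M_\sigma$ on $\cal C_\infty$ then shows that the whole tower is recovered inductively from $\vec v_0$ as the successive non-negative preimages, so ${\frak m}_0$ is injective, completing the first part.

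For the eigenvector description I would exploit that $M_\sigma: \cal C_\infty \to \cal C_\infty$ is a linear bijection and therefore permutes the extremal rays of $\cal C_\infty$. By the correspondence just established together with Fact \ref{tower-bound}(1), the cone $\cal C_\infty$ has at most $d = \card \cal A$ extremal rays; a permutation of at most $d$ objects has order dividing $d!$, so $M_\sigma^{k}$ with $k = (\card \cal A)!$ fixes every extremal ray setwise. If a non-negative $v$ spans such a fixed ray then $M_\sigma^{k} v = \lambda v$, and $\lambda > 0$ since $M_\sigma$ is non-negative and injective on $\cal C_\infty$; thus each extremal vector of $\cal C_\infty$ is a non-negative eigenvector of $M_\sigma^{k}$, and conversely the non-negative eigenvectors of $M_\sigma^k$ lying in $\cal C_\infty$ that are extremal span extremal rays. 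This gives the stated identification, and the supplementary fact that these are precisely the computable ``distinguished'' Perron eigenvectors attached to the maximal strongly connected components of the graph of $M_\sigma$ is the Perron–Frobenius bookkeeping carried out in Appendix 11.3 of \cite{BHL1}.

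The main obstacle is the injectivity of ${\frak m}_0$. Theorem \ref{thm1} only delivers surjectivity of ${\frak m}$, and in the general $S$-adic setting distinct towers can produce the same measure, so one genuinely needs the rigidity special to the stationary case — encoded in the automorphism $M_\sigma|_{V_\infty}$ and the uniqueness of non-negative preimages inside $\cal C_\infty$ — in order to collapse towers, invariant measures, and points of $\cal C_\infty$ into a single object. Once this rigidity is in place, both the $1$–$1$ correspondence and the eigenvector description follow from elementary convex and non-negative linear algebra, with the finer Perron–Frobenius description imported from \cite{BHL1}.
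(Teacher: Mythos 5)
Your proposal is correct, and it is genuinely different from the paper's treatment for a simple reason: the paper offers no proof at all of Proposition \ref{mieux-que-moulinette} --- it is quoted (with a \qed) as Theorem 10.8 of \cite{BHL1}, preceded only by the remark that a compatible vector tower of the stationary sequence should be viewed as a sequence of non-negative vectors mapped to one another by $M_\sigma$. You start from exactly that specialization, but then replace the citation by a self-contained derivation from the results the paper does quote (Theorem \ref{thm1}, Lemma \ref{nested}, Fact \ref{tower-bound}, and the extremal-ray description of ergodic measures from the preliminaries --- strictly speaking this last point is stated in the paragraph preceding Proposition \ref{Fer-Mont} rather than in it). The two ingredients you add are sound and are the real content: (i) the Fitting-type stabilization argument --- the subspaces $M_\sigma^n(\R^{\cal A})$ stabilize to $V_\infty$, on which $M_\sigma$ is an automorphism, whence $M_\sigma$ restricts to a bijection of $\cal C_\infty$; since every entry of a stationary tower lies in $\cal C_\infty$, the tower is recovered from $\vec v_0$ by successive preimages, so ${\frak m}_0$ is injective, hence ${\frak m}$ is injective, hence $\zeta$ is an isomorphism of cones onto $\cal C_\infty$; this is precisely what upgrades Lemma \ref{nested} from a lower bound to the claimed 1-1 correspondence; (ii) the observation that $M_\sigma|_{\cal C_\infty}$, being a linear bijection, permutes the at most $d$ extremal rays (the bound coming from your part (i) together with Fact \ref{tower-bound}), so that $M_\sigma^{d!}$ fixes each ray and every extremal vector of $\cal C_\infty$ is a non-negative eigenvector of $M_\sigma^{d!}$ with positive eigenvalue, matching the claim $k = (\card\,\cal A)!$. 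What your route buys is independence from Theorem 10.8 of \cite{BHL1}; what it does not replace --- and here you, like the proposition itself, still defer to Appendix 11.3 of \cite{BHL1} --- is the converse Perron--Frobenius bookkeeping identifying these extremal vectors with the ``distinguished'' eigenvectors attached to the diagonal blocks of $M_\sigma^k$, a notion that is in any case only defined in that appendix. Within the intended scope of the statement, your argument is complete.
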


The determination of the extremal eigenvectors named in the above proposition is in practice for any given reducible matrix $M$ quite convenient, once one has penetrated the slightly intricate logic of the two ``conflicting'' 
natural 
partial orders on the primitive diagonal blocks of the power $M^k$. A concise description of all ingredients needed is given in Appendix 11.3 of \cite{BHL1}; for the convenience of the reader we will now single out the most frequently occurring non-primitive case:

\begin{cor}
\label{2-strata-case}
Let $\sigma$ be an everywhere growing substitution, and assume that the incidence matrix $M_\sigma$ satisfies the following conditions:
\begin{enumerate}
\item[(a)]
$M_\sigma$ is a $2 \times 2$ block lower triangular matrix.
\item[(b)]
The two diagonal blocks $M_{1,1}$ and $M_{2,2}$ are primitive, with Perron-Frobenius eigenvalues 
$\lambda_1 
\geq 
1$ and $\lambda_2 >1$ respectively.
\item[(c)]
The lower left off-diagonal block $M_{2,1}$ is non-zero.
\end{enumerate}

\smallskip
\noindent
(1)
If $\lambda_2 \geq \lambda_1$, then there is (up to scalar multiples) only one non-negative eigenvector $\vec v$ of $M_\sigma$ (with eigenvalue $\lambda_2$), which has zero-coordinates on the top block (corresponding to $M_{1,1}$), and non-zero coordinates on the bottom block (corresponding to $M_{2,2}$). In this case $X_\sigma$ has only one ergodic probability measure, and its support is the sub-subshift of $X_\sigma$ generated by the letters of $\cal A$ that define the bottom block.

\smallskip
\noindent
(2)
If $\lambda_1 > \lambda_2$, then there are (up to scalar multiples) precisely two non-negative eigenvectors $\vec v_1$ and $\vec v_2$ of $M_\sigma$. The vector $\vec v_2$ (with eigenvalue $\lambda_2$) has the same properties as the eigenvector $\vec v$ in case (1). On the other hand, the eigenvector $\vec v_1$ (with eigenvalue $\lambda_1$) is positive in all coordinates. 

In this case $X_\sigma$ has precisely two ergodic probability measures $\mu_1$ and $\mu_2$: The support of $\mu_2$ is, as in the above case (1), only the sub-subshift of $X_\sigma$ generated by the letters of $\cal A$ that define the bottom block. The support of $\mu_2$ is all of $X_\sigma$.
\qed
\end{cor}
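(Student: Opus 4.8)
The plan is to deduce everything from Proposition \ref{mieux-que-moulinette}, which reduces the determination of the ergodic measures to a purely linear-algebraic problem: identifying the extremal non-negative eigenvectors of $M_\sigma$ (equivalently, of a suitable positive power $M_\sigma^k$). Since $M_{1,1}$ and $M_{2,2}$ are primitive, so are their powers, and raising to the $k$-th power preserves the ordering between $\lambda_1$ and $\lambda_2$; hence the non-negative eigendirections of $M_\sigma^k$ coincide with those of $M_\sigma$ itself, and it suffices to classify the latter. Writing a candidate eigenvector in block form $\vec v = (\vec v^{(1)}, \vec v^{(2)})$, with $\vec v^{(1)}$ on the top block and $\vec v^{(2)}$ on the bottom block, the eigenvalue equation $M_\sigma \vec v = \lambda \vec v$ becomes $M_{1,1}\vec v^{(1)} = \lambda \vec v^{(1)}$ together with $(\lambda I - M_{2,2})\vec v^{(2)} = M_{2,1}\vec v^{(1)}$.

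First I would treat the top equation. By Perron--Frobenius applied to the primitive block $M_{1,1}$, a non-negative $\vec v^{(1)}$ is either $0$ or (up to scale) the strictly positive PF eigenvector, in which case $\lambda = \lambda_1$. The case $\vec v^{(1)} = 0$ always produces exactly one eigendirection: the PF eigenvector of $M_{2,2}$, with eigenvalue $\lambda_2$, zero on the top block and positive on the bottom block; this is the vector $\vec v$ of case (1) and the vector $\vec v_2$ of case (2). The remaining question is whether the case $\vec v^{(1)} > 0$, $\lambda = \lambda_1$, yields a second non-negative eigendirection, and this is exactly what separates the two cases.

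For case (2), $\lambda_1 > \lambda_2$ means $\lambda_1$ exceeds the spectral radius of $M_{2,2}$, so $(\lambda_1 I - M_{2,2})^{-1} = \sum_{k \geq 0}\lambda_1^{-(k+1)}M_{2,2}^k$ converges and is non-negative; thus $\vec v^{(2)} = (\lambda_1 I - M_{2,2})^{-1}M_{2,1}\vec v^{(1)}$ is a well-defined non-negative solution, and since $M_{2,1} \neq 0$ and $M_{2,2}$ is primitive its bottom coordinates are in fact all positive, giving the everywhere-positive eigenvector $\vec v_1$. For case (1), $\lambda_2 \geq \lambda_1$, I would rule out any non-negative solution by pairing the bottom equation with the strictly positive left PF eigenvector $\vec u$ of $M_{2,2}$ (eigenvalue $\lambda_2$): this yields $(\lambda_1 - \lambda_2)\langle \vec u, \vec v^{(2)}\rangle = \langle \vec u, M_{2,1}\vec v^{(1)}\rangle$, whose right-hand side is strictly positive (as $\vec u > 0$ and $M_{2,1}\vec v^{(1)} \geq 0$ is non-zero) while the left-hand side is $\leq 0$ for every $\vec v^{(2)} \geq 0$ --- a contradiction that also covers the delicate boundary case $\lambda_1 = \lambda_2$. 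I expect this sign argument to be the main obstacle, being the one genuinely non-mechanical step; it shows there is a unique non-negative eigendirection in case (1) and exactly two linearly independent ones in case (2) (necessarily both extremal, since eigenvectors for the distinct eigenvalues $\lambda_1 \neq \lambda_2$ admit no non-trivial combination that is again an eigenvector). Proposition \ref{mieux-que-moulinette} then gives one, respectively two, ergodic probability measures.

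It remains to identify the supports, for which I would specialise Theorem \ref{thm1}(4) to the stationary tower $\vec v_n = \lambda^{-n}\vec v$ associated with an eigenvector $\vec v$ of eigenvalue $\lambda$. Since $\sum_{a}(M_\sigma^n)_{a_i,a}\vec v(a) = \lambda^n \vec v(a_i)$, the formula collapses to $\mu([a_i]) = \vec v(a_i)$. When $\vec v$ is zero on the top block (the measures attached to $\vec v$, resp. $\mu_2$), every single-letter cylinder of a top letter is null, hence by shift-invariance so is every cylinder $[w]$ containing a top letter; the complementary set of points using only bottom letters is precisely the bottom sub-subshift $X_{\mathcal B}$, which is minimal because $M_{2,2}$ is primitive (Proposition \ref{weakly-primitive-BD}), so the support is exactly $X_{\mathcal B}$. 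For $\mu_1$ (coming from the everywhere-positive $\vec v_1$), strict positivity of $\vec v_1$ together with the monotone convergence in Theorem \ref{thm1}(4) gives $\mu_1([w]) > 0$ as soon as $w$ is a factor of some $\sigma^n(a)$, i.e. for every $w \in \mathcal{L}_\sigma$; as these cylinders form a basis of the topology of $X_\sigma$, the support of $\mu_1$ is all of $X_\sigma$.
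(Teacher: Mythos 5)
Your classification of the non-negative eigenvectors of $M_\sigma$ is correct and complete: splitting the eigenvalue equation along the blocks, solving the bottom equation by the Neumann series $(\lambda_1 I - M_{2,2})^{-1}=\sum_{k\geq 0}\lambda_1^{-(k+1)}M_{2,2}^k$ when $\lambda_1>\lambda_2$, and excluding any non-negative solution when $\lambda_2\geq\lambda_1$ by pairing with the left Perron--Frobenius eigenvector of $M_{2,2}$ (which indeed covers the boundary case $\lambda_1=\lambda_2$) is precisely the Perron--Frobenius analysis of reducible matrices that the paper delegates to Appendix 11.3 of \cite{BHL1}; combined with Proposition \ref{mieux-que-moulinette} it yields the asserted number of ergodic measures. (One minor repair: ``taking powers preserves the order of $\lambda_1,\lambda_2$'' does not by itself show that $M_\sigma^k$ has the same non-negative eigendirections as $M_\sigma$; the clean argument is that $M_\sigma^k$ again satisfies hypotheses (a)--(c) --- its diagonal blocks are $M_{1,1}^k,M_{2,2}^k$ and its lower left block $\sum_{i+j=k-1}M_{2,2}^iM_{2,1}M_{1,1}^j$ is non-zero --- so your classification applies to it verbatim.) Your full-support argument for $\mu_1$ via the increasing sums in Theorem \ref{thm1}(4) is also correct.

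The genuine gap is the support claim for the bottom measure ($\vec v$ in case (1), $\mu_2$ in case (2)). You assert that the set $Y$ of points of $X_\sigma$ using only bottom letters ``is precisely the bottom sub-subshift $X_{{\cal B}}$'' and then invoke minimality of $X_{{\cal B}}$. That identification is false in general: $\cal L_\sigma$ may contain arbitrarily long bottom-letter words which occur in no $\sigma^n(b)$ with $b$ a bottom letter, and these can assemble into biinfinite bottom-only points of $X_\sigma$. For a concrete counterexample satisfying all hypotheses, take ${\cal A}=\{a,b,c,e\}$, top block $\{a\}$, bottom block ${\cal B}=\{b,c,e\}$, and $\sigma: a\mapsto abca$, $b\mapsto eb$, $c\mapsto ce$, $e\mapsto ecbe$; write $\tau$ for the restriction of $\sigma$ to ${\cal B}^*$. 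Then $M_{2,2}$ is primitive with $\lambda_2=3>2=\lambda_1$, so we are in case (1). An easy induction shows that $bc$ does not belong to the language of $\tau$: it occurs inside no image $\tau(z)$, and a boundary occurrence (last letter of $\tau(z_1)$)(first letter of $\tau(z_2)$) can only be $bc$ if $z_1z_2=bc$, since $\tau(b)$ is the only image ending in $b$ and $\tau(c)$ the only one beginning with $c$. On the other hand $\tau(b)$ ends in $b$ and $\tau(c)$ begins with $c$, so the words $\tau^k(bc)=\tau^k(b)\tau^k(c)$ are nested around the seam, and their limit $\eta=\lim_k \tau^k(b)\,.\,\tau^k(c)$ (the dot marking the seam) is a well-defined biinfinite word; every factor of $\eta$ is a factor of some $\tau^k(bc)$, which is a factor of $\sigma^{k+1}(a)$, hence $\eta\in X_\sigma$. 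Thus $\eta\in Y$, yet $\eta$ contains the factor $bc$, so $\eta\notin X_{{\cal B}}$. Consequently $Y\supsetneq X_{{\cal B}}$, $Y$ is not minimal, and your argument establishes only ${\rm supp}(\mu_2)\subseteq Y$, not the stated equality.

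The conclusion itself is correct, and the repair is exactly the computation you already use for $\mu_1$: applying Theorem \ref{thm1}(4) to the stationary tower $(\lambda_2^{-n}\vec v_2)_{n}$ and using that $\vec v_2$ vanishes on the top letters, one gets $\mu_2([w])=\lim_n\sum_{b\in{\cal B}}\lambda_2^{-n}\vec v_2(b)\,|\tau^n(b)|_w$. This limit is zero for \emph{every} $w$ outside the language of $\tau$ --- not merely for words containing a top letter --- and it is positive for every $w$ in that language, since the sums are increasing in $n$ and $\vec v_2$ is strictly positive on ${\cal B}$. Hence ${\rm supp}(\mu_2)=X_{{\cal B}}$ exactly, with no appeal to minimality or to the identification $Y=X_{{\cal B}}$.
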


%%%%%%%%%%%%%%%%%%%%
\subsection{Cylinder measures}

Let $\sigma = \sigma_0 \circ \sigma_1 \circ \sigma_2 \circ \cdots$ be a directive sequence of substitutions $\sigma_n: \cal A^*_{n+1} \to \cal A^*_n$, 
and let $\bvec v = (\vec v_n)_{n \in \N \cup \{0\}}$ be a $\sigma$-compatible vector tower as in Definition \ref{vector-sequence}. We now define, for any level $n \geq 0$ and any two letters $a, a' \in \cal A_n$, a {\em weight}
$\omega^n_{a, a'}$ through
equality (\ref{local-weights}) below. The latter can be viewed a special case of equality (\ref{approximation-sum}), thus ensuring the existence of the limit on the right hand side: 
\begin{equation}
\label{local-weights}
\omega^n_{a, a'} := \lim_{k \to \infty} 
\sum_{b \,\in \cal A_k} \vec v_k(b) \,\,  |\sigma_{[n,k)}(b)|_{a a'}
\end{equation}
Here the pair $(a, a')$ has to be understood as the ``transition'' from $a$ to $a'$, and by its ``$\sigma_{n-1}$-image'' $(c, c')$ we understand correspondingly the transition of the last letter $c$ of $\sigma_{n-1}(a)$ to the first letter $c'$ of $\sigma_{n-1}(a')$. We write $(c, c') = \sigma_{n-1}^+(a, a')$.

Of course, such transitions occur also inside $\sigma_{n-1}(a)$ or $\sigma_{n-1}(a')$, and indeed, we derive from (\ref{local-weights}) and from the $\sigma$-compatibility of $\bvec v$:
\begin{equation}
\label{weight-transition}
\omega^{n-1}_{c, c'} = \sum_{\{(a, a') \,\mid\, (c, c') = \sigma_{n-1}^+(a, a')\}} \omega^n_{a, a'} \,\, + \,\, \sum_{a \in \cal A_n}  \vec v_n(a) \,\,  |\sigma_{n}(a)|_{c c'}
\end{equation}

\medskip

The following has not been stated explicitly in \cite{BHL1}; 
we 
will 
though derive it quickly from 
the set-up studied there:

\begin{cor}
\label{cylinder-computation}
Let $\sigma$ and $\bvec v$ be as above.
Then, for any word $w \in \cal A_0^*$, 
and any integer $n\geq 0$ 
with 
$|\sigma_{[0,n)}(a)| \geq |w|-1$ for all $a \in \cal A_n$,
one obtains: 
$$
\begin{array}{cl}
\mu^{\tiny\bvec v}([w]) = & \underset{a \,\in \cal A_n}{\sum} \vec v_n(a) \cdot |\sigma_{[0,n)}(a)|_w \,\,+\\
                  & \underset{a, a' \,\in \cal A_n}{\sum} \omega^n_{a, a'} \, (|\sigma_{[0,n)}(aa')|_w - |\sigma_{[0,n)}(a)|_w - |\sigma_{[0,n)}(a')|_w)                
\end{array}
$$

\end{cor}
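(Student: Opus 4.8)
The plan is to start from the limit formula (\ref{approximation-sum}) of Theorem \ref{thm1}(4), but applied at a variable level $k \to \infty$, and to reorganize the counting of occurrences of $w$ according to the coarser level-$n$ structure, where $n$ is fixed and satisfies the stated length hypothesis. Concretely, for each $k \geq n$ and each $b \in \cal A_k$ I would write $\sigma_{[0,k)}(b) = \sigma_{[0,n)}\big(\sigma_{[n,k)}(b)\big)$, set $u := \sigma_{[n,k)}(b) \in \cal A_n^*$, and then count the occurrences of $w$ in $\sigma_{[0,n)}(u)$ by recording, for each occurrence, the level-$n$ block $\sigma_{[0,n)}(a)$ (with $a$ a letter of $u$) in which it begins.

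The key combinatorial step is the resulting decomposition. Because the hypothesis $|\sigma_{[0,n)}(a)| \geq |w|-1$ holds for every $a \in \cal A_n$, no occurrence of $w$ inside $\sigma_{[0,n)}(u)$ can meet three consecutive level-$n$ blocks: such an occurrence would have to contain a middle block of length $\geq |w|-1$ entirely, together with at least one letter on each side, forcing $|w| \geq |w|+1$. Hence every occurrence either lies inside a single block $\sigma_{[0,n)}(a)$ or straddles exactly one junction between consecutive letters $a, a'$ of $u$, and the number of occurrences straddling such a junction is exactly $|\sigma_{[0,n)}(aa')|_w - |\sigma_{[0,n)}(a)|_w - |\sigma_{[0,n)}(a')|_w$, the two subtracted terms removing precisely the occurrences confined to one of the two blocks. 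Summing over the letters and the length-two factors of $u$ I obtain
$$|\sigma_{[0,k)}(b)|_w = \sum_{a \in \cal A_n}|\sigma_{[n,k)}(b)|_a\,|\sigma_{[0,n)}(a)|_w + \sum_{a, a' \in \cal A_n}|\sigma_{[n,k)}(b)|_{aa'}\,\big(|\sigma_{[0,n)}(aa')|_w - |\sigma_{[0,n)}(a)|_w - |\sigma_{[0,n)}(a')|_w\big).$$

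Next I would multiply by $\vec v_k(b)$ and sum over $b \in \cal A_k$, interchanging the finite sums over $a$ (resp. $a, a'$) with the sum over $b$. For the first sum, $\sigma$-compatibility of $\bvec v$ gives $\vec v_n = M_{[n,k)}\vec v_k$, and since the $(a,b)$-entry of $M_{[n,k)}$ equals $|\sigma_{[n,k)}(b)|_a$, one has $\sum_{b \in \cal A_k}\vec v_k(b)\,|\sigma_{[n,k)}(b)|_a = \vec v_n(a)$ \emph{exactly}, for every $k \geq n$. For the second sum, the coefficient $\sum_{b \in \cal A_k}\vec v_k(b)\,|\sigma_{[n,k)}(b)|_{aa'}$ converges, as $k \to \infty$, to the weight $\omega^n_{a, a'}$ by its defining equation (\ref{local-weights}). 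Passing to the limit $k \to \infty$ in $\mu^{\tiny\bvec v}([w]) = \lim_{k}\sum_{b \in \cal A_k}\vec v_k(b)\,|\sigma_{[0,k)}(b)|_w$ then yields the asserted formula.

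The only real obstacle is the combinatorial bookkeeping in the key step: one must verify that the length hypothesis is exactly what rules out occurrences spanning three or more level-$n$ blocks, and that the inclusion–exclusion expression $|\sigma_{[0,n)}(aa')|_w - |\sigma_{[0,n)}(a)|_w - |\sigma_{[0,n)}(a')|_w$ counts junction-straddling occurrences with no double counting. Everything else is a routine interchange of finite sums together with the two identifications above (the exact one coming from compatibility, the limiting one from (\ref{local-weights})), so that the limit on the right-hand side of (\ref{approximation-sum}) collapses to the displayed finite expression.
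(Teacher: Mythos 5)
Your proof is correct, but it follows a genuinely different route from the one in the paper. The paper's own proof is essentially a translation argument: it identifies the weights $\omega^n_{a,a'}$ with the values $\omega_n(\epsilon_{a,a'})$ of the weight functions on local edges of the one-vertex graphs $\Gamma_n$ from \cite{BHL1} (via Remark 9.5 there), and then quotes Propositions 6.9 and 7.4 of \cite{BHL1}, carried over into $S$-adic language by the dictionary in section 3 of that paper. You instead derive the formula self-containedly from what is restated in the present paper: the block decomposition $\sigma_{[0,k)}(b)=\sigma_{[0,n)}\bigl(\sigma_{[n,k)}(b)\bigr)$, the observation that the hypothesis $|\sigma_{[0,n)}(a)|\geq |w|-1$ forbids any occurrence of $w$ from meeting three consecutive level-$n$ blocks (for $|w|=1$ no occurrence straddles a junction at all, and for $|w|\geq 2$ a fully contained middle block plus a letter on each side would force $|w|\geq |w|+1$), so that every occurrence is either internal to one block or straddles exactly one junction, the straddling ones being counted without double counting by the inclusion--exclusion term; then the exact identity $\sum_{b\in\cal A_k}\vec v_k(b)\,|\sigma_{[n,k)}(b)|_a=\vec v_n(a)$ coming from $\sigma$-compatibility, the convergence of the junction coefficients to $\omega^n_{a,a'}$ by (\ref{local-weights}), and the limit formula (\ref{approximation-sum}); finiteness of the alphabets justifies all interchanges of limit and sum. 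Each of these steps is sound. What your route buys is a proof a reader can verify entirely within this paper, with no need to unwind the graph-tower formalism of \cite{BHL1}; what the paper's route buys is brevity and consistency with its stated purpose of serving as a dictionary to \cite{BHL1}, where the same combinatorial bookkeeping is done once and for all at the level of weight functions on graph towers.
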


\begin{proof}
In \cite{BHL1} the vector towers $\bvec v$ from Theorem \ref{thm1} are defined by means of {\em weight function} $\omega_n$ on 1-vertex graphs $\Gamma_n$ that topologically realize the monoid $\cal A_n$. 
For any two letters $a, a' \in \cal A_n$ there is a {\em local edge} $\epsilon_{a, a'}$ defined in $\Gamma_n$, and if $\sigma$ is everywhere growing, then the vector tower $\bvec v$ in turn determines the weight functions $\omega_n$. In particular, see Remark 
9.5 
of \cite{BHL1}, the value of $\omega_n(\epsilon_{a, a'})$ is given via $\omega_n(\epsilon_{a, a'}) := \omega^n_{a, a'}$ through equality (\ref{local-weights}).
One obtains now the claimed statement as a direct translation of Propositions 
6.9 
and 7.4 from \cite{BHL1} into the $S$-adic language used here, following the instructions carefully laid out in section 3 of \cite{BHL1}.
\end{proof}

%%%%%%%%%%%%%%%%%%%%

\section{Minimal subshifts with many ergodic measures}
\label{d-measures1}

This section is devoted to the proof of Proposition \ref{d-measures}. We give first in subsection \ref{prop(1)} the proof of part (1) of this proposition, and improve this coarser approach in subsection \ref{prop(2)} to a proof of part (2).

\subsection{The general construction procedure}
\label{prop(1)}

${}^{}$

In this subsection we present (in a purposefully concrete and ``simplistic'' way) 
in 7 steps
a construction of a directive sequence $\sigma$ with the desired properties: 

\medskip
\noindent
(1)
We first consider matrices of type $M_\epsilon =  I_d + \epsilon 1_{d\times d}$, where $\epsilon > 0$, $I_d$ denotes the $(d \times d)$-unit matrix, and $1_{d\times d}$ denotes the $(d\times d)$-matrix that has all coefficients equal to 1. 

Let $\vec c \in \R^d$ 
denote the column vector with all coefficients equal to $1$, and let $\vec e = \vec e_k$ be any one of the standard basis vectors 
of $\R^d$. We compute:
$$
M_\epsilon \cdot \vec e = \vec e + \epsilon \vec c
\qquad \text{and} \qquad
M_\epsilon \cdot \vec c = \vec c + d \epsilon \vec c = (1 + d \epsilon) \vec c
$$

As a consequence, for any $\epsilon_1, \ldots, \epsilon_q > 0$ we obtain:

\begin{equation*}
\begin{split}
M_{\epsilon_1} \cdot M_{\epsilon_2} \cdot \ldots \cdot M_{\epsilon_q} \cdot \vec e
=
\vec e + [\epsilon_1 + (1+ d \epsilon_1) \epsilon_2 + (1+ d \epsilon_1) (1+ d \epsilon_2) \epsilon_3 \\+ \ldots +
(1+ d \epsilon_1) (1+ d \epsilon_2) \ldots (1+ d \epsilon_{q-1}) \epsilon_q] \vec c\\
= \vec e + (L_1 \epsilon_1 + L_2 \epsilon_2 + L_3 \epsilon_3 + \ldots + L_q \epsilon_q) \vec c\,,
\end{split}
\end{equation*}
where we 
set 
$L_1 := 1$ and $L_s := (1+ d \epsilon_1) (1+ d \epsilon_2) \ldots (1+ d \epsilon_{s-1})$ for any $s \in \{2, \ldots q\}$.

\medskip
\noindent
(2)
We now choose for every $n \in \N$ a value $\epsilon_n > 0$ which is small enough so that it satisfies
\begin{enumerate}
\item[(a)]
$\log (1+ d \epsilon_n) < 2^{-n} \log 2$ and
\item[(b)]
$\epsilon_n < \frac{1}{ 2^{n+1}}$\,.
\end{enumerate}
From (a) we deduce $L_n < 2$, 
so that we compute:
$$
M_{\epsilon_1} \cdot M_{\epsilon_2} \cdot \ldots \cdot M_{\epsilon_q} \cdot \vec e
=
\vec e + K_q \vec c
$$
with $0 < K_q \leq K_{q+1}  < 1$ for all $q \in \N$.

We note that the same result stays valid if one further lowers the value of the $\epsilon_n$, so that we can assume that $\epsilon_n = \frac{1}{\ell(n)}$ for some integer $\ell(n) \in \N$.

\medskip
\noindent
(3)
For the family of $\epsilon_n$ as in (2) we 
consider any of the standard basis vectors $\vec e_k$ and obtain 
$$ \lim_{n \to \infty} 
M_{\epsilon_1} \cdot M_{\epsilon_2} \cdot \ldots \cdot M_{\epsilon_n} \cdot \vec e_k
= \vec e_k + K_\infty \vec c
$$
for some $0 < K_\infty \leq 1$.

This shows that the nested intersection of the cones 
$M_{\epsilon_1} \cdot M_{\epsilon_2} \cdot \ldots \cdot M_{\epsilon_n}(\R^d_{\geq 0})$ for all $n \in \N$ is equal to the cone generated by the vectors $\vec e_k + K_\infty \vec c$, which is simplicial of dimension $d$.

\medskip
\noindent
(4)
We now define for any $\ell \in \N$ the integer matrix $M'_\ell = \ell I_d + 1_{d \times d}$, and note that for $\epsilon = \frac{1}{\ell}$ we have
$$M'_{\ell} = \ell M_{\epsilon}$$
It follows, since in (2) we chose $\ell(n) \in \N$ such that $\epsilon_n = \frac{1}{\ell(n)}$, that the nested intersection of the cones 
$M'_{\ell(1)} \cdot M'_{\ell(2)} \cdot \ldots \cdot M'_{\ell(n)}(\R^d_{\geq 0})$ for all $n \in \N$ is also equal to the cone generated by the vectors $\vec e_k + K_\infty \vec c$ and thus simplicial of dimension $d$.

\medskip
\noindent
(5)
Next we 
consider an alphabet $\cal A = \{a_1, \ldots, a_d\}$ and substitutions 
$$\sigma_n: \cal A^* \to \cal A^*, \quad a_k \mapsto a_k^{\ell(n)} a_1 \ldots a_d \quad (\text{for all  } a_k \in \cal A) \, ,$$
so that one has $M_{\sigma_n} = M'_{\ell(n)}$ for all $n \in \N$.

For formal reasons we add the substitution $\sigma_0 := {\rm id}_{\cal A^*}$ to our list.

\medskip
\noindent
(6)
Since 
for $n \geq 1$ any 
of the incidence matrices $M_{\sigma_n}$ is positive, 
it follows that the directive sequence $\sigma = \sigma_0 \circ \sigma_1 \circ \sigma_2 \circ \ldots$ is weakly primitive 
as defined in section \ref{prelims},
so that Proposition \ref{weakly-primitive-BD} shows that the associated subshift $X_\sigma$ is minimal.

\medskip
\noindent
(7)
We now apply Theorem \ref{thm1}, Lemma \ref{nested} and Fact \ref{tower-bound} to deduce 
that
$\cal M(X_\sigma)$ is a simplicial cone of dimension $d$, so that
$X_\sigma$ 
supports $d$ distinct ergodic probability measures.

\begin{rem}
\label{variations}
Once the matrices $M'_{\ell(n)}$ have been established in step (4) of the above construction, the choice of the substitutions $\sigma_n$ with $M_{\sigma_n} = M'_{\ell(n)}$ in step (5) is of course only one of many possible such choices. 
\end{rem}

%%%%%%%%%%%%%%%%%%%
\subsection{Reduction to finite $S$}
\label{prop(2)}

${}^{}$

We will now explain how the method presented in the previous subsection can be refined to obtain the same result, but for a 
set $S$ of substitutions 
that is finite.
The latter are given by the substitutions $\rho_j$,  $\theta_j$ and $\tau_j$, for any $j \in \{1, \ldots, d\}$, which all fix any $a_k$ with $k \neq  j$ and map $a_j$ to $\rho_j(a_j) = a_j^3$, $\theta_j(a_j) = a_j^2$ and 
$\tau_j(a_j) = a_j a_{j+1} \ldots a_d a_1 \ldots a_{j-1}$ 
respectively. We also use the abbreviation $\theta'_j := \theta_d \ldots \theta_{j+1} \theta_{j-1} \ldots \theta_1$.

\smallskip

We first need to recall the following well known fact (deduced easily from the non-commensurability of the logarithms of $2$ and $3$):

\begin{fact}
\label{needed}
(1)
The set of numbers $\frac{2^m}{3^q}$, for any integers $m, q \geq 0$, is dense in $\R_{\geq 0}$.

\smallskip
\noindent
(2)
More concretely, we need the following, which follows from (1):

For any $\epsilon > 0$ there are integers $m, q \geq 0$ such that $1 \geq \frac{2^m}{3^q} \geq 1 - \frac{\epsilon}{2}$. We then define the integer $h \geq 1$ through $h := 3^q - 2^m$.
\qed
\end{fact}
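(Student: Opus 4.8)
The plan is to regard part (1) as the real content and to read off part (2) as an immediate corollary. Taking logarithms converts the multiplicative set $\{2^m/3^q : m,q \geq 0\}$ into the additive set $\{m\log 2 - q\log 3 : m,q \geq 0\}$; since $t \mapsto e^t$ is a homeomorphism of $\R$ onto $\R_{>0}$ (with $e^t \to 0$ as $t \to -\infty$), density of the first set in $\R_{\geq 0}$ is equivalent to density of the second in $\R$. Thus part (1) reduces to showing that $\{m\log 2 - q\log 3 : m,q \in \N\cup\{0\}\}$ is dense in $\R$.

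To prove this I would first record that $\theta := \log 2/\log 3$ is irrational: an equality $\theta = p/r$ with $p,r \in \N$ would give $2^r = 3^p$, contradicting unique factorization. I then invoke the standard fact that the orbit $\{m\theta \bmod 1 : m \in \N\}$ of an irrational rotation is dense in $[0,1)$ (Kronecker/Weyl, or elementarily via Dirichlet's pigeonhole principle), each value being approached by infinitely many $m$. Given a target $t \in \R$, set $s := t/\log 3$, choose $m \geq 0$ large with $m\theta \bmod 1$ close to $s \bmod 1$, and put $q := \lfloor m\theta\rfloor - \lfloor s\rfloor$; then $m\theta - q - s = (m\theta \bmod 1) - (s \bmod 1)$ is small, so $m\log 2 - q\log 3 = (m\theta - q)\log 3$ is close to $t$. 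Taking $m$ large forces $\lfloor m\theta\rfloor \geq \lfloor s\rfloor$, hence $q \geq 0$, so both exponents respect the sign constraints. This yields the required density, and part (1) follows.

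For part (2) I apply part (1) with target $1$: density produces integers $m,q \geq 0$ with $2^m/3^q$ lying in the \emph{open} interval $(1-\tfrac{\epsilon}{2},\,1)$. Then $2^m < 3^q$, so $h := 3^q - 2^m$ is a positive integer, i.e. $h \geq 1$, while $2^m/3^q \geq 1-\tfrac{\epsilon}{2}$ and $2^m/3^q \leq 1$ hold by construction. Using the open interval rather than $[1-\tfrac{\epsilon}{2},1]$ automatically discards the degenerate solution $m=q=0$, which would give the value $1$ and $h=0$.

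I expect the only delicate point to be the bookkeeping with the non-negativity constraints: the unrestricted combinations $m\log 2 + q\log 3$ form a dense subgroup of $\R$ essentially for free, but here the coefficient of $\log 3$ must stay non-positive, so one must verify that the approximating $m$ can be pushed large enough to keep the induced $q$ non-negative. This, together with the strict inequality in part (2) needed to force $h \geq 1$, is the main thing to watch; the remainder is the routine irrationality-plus-equidistribution argument.
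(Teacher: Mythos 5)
Your proof is correct, and it follows exactly the route the paper has in mind: the paper states Fact \ref{needed} without proof, remarking only that it is ``deduced easily from the non-commensurability of the logarithms of $2$ and $3$,'' which is precisely the irrationality-of-$\log 2/\log 3$ plus irrational-rotation-density argument you carry out. Your added care with the sign constraint $q \geq 0$ and with using the open interval $(1-\tfrac{\epsilon}{2},1)$ to force $h \geq 1$ fills in the two details the paper leaves implicit.
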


For any $\epsilon > 0$ and $m, q$ and $h$ as in Fact \ref{needed} (2) we now define a substitution 
$$\sigma'_{\epsilon,j} := \rho^{q}_j \tau^{h}_j {\theta'_j}^{m}$$
and observe that on the generators $\sigma'_{\epsilon, j}$ 
acts as follows:
$$ 
a_j \mapsto a_j^{3^{q}} (a_{j+1} \ldots a_d a_1 \ldots a_{j-1})^{{h}}$$
$$a_k \mapsto a_k^{2^{m}} \qquad \text{for \,\, all} \qquad k \neq j$$

From the above definition of $h$ we quickly deduce that the incidence matrix $M_{\sigma'_{\epsilon,j}} =: M'_{\epsilon, j}$ maps the center vector $\vec c$ from the previous section to
$$M'_{\epsilon, j} \cdot \vec c = 3^q \vec c\, ,$$
and similarly any basis vector $\vec e_k$ with $k \neq j$ to
$$M'_{\epsilon, j} \cdot \vec e_k = 2^m \vec e_k\, .$$
For $\vec e_j$ we compute: 
$$M'_{\epsilon, j} \cdot \vec e_j = h \vec c + (3^q - h) \vec e_j = h \vec c + 2^m \vec e_j$$

\begin{lem}
\label{include}
For any $1 \geq \epsilon > 0$ the matrices $M'_{\epsilon, j}$ above and $M_\epsilon$ from the previous subsection satisfy the following inclusion:
$$M_\epsilon(\R^d_{\geq 0}) \subset M'_{\epsilon, j} (\R^d_{\geq 0})$$
\end{lem}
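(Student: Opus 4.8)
The plan is to reduce the claimed cone inclusion to a finite set of membership checks on the spanning rays. Since $\R^d_{\geq 0}$ is generated by the standard basis vectors $\vec e_1, \ldots, \vec e_d$, the cone $M_\epsilon(\R^d_{\geq 0})$ is generated by the images $M_\epsilon \vec e_k = \vec e_k + \epsilon\vec c$, while $M'_{\epsilon, j}(\R^d_{\geq 0})$ is generated by the images $M'_{\epsilon, j}\vec e_k$. A convex cone is contained in another as soon as each generator of the first lies in the second, so it suffices to verify, for each $k$, that $\vec e_k + \epsilon\vec c$ is a non-negative linear combination of $M'_{\epsilon, j}\vec e_1, \ldots, M'_{\epsilon, j}\vec e_d$.

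First I would record the generators of the target cone from the explicit action computed above. After discarding the positive scalars $2^m$, the cone $M'_{\epsilon, j}(\R^d_{\geq 0})$ is generated by the vectors $\vec e_k$ for $k \neq j$ together with the single ``mixed'' generator $h\vec c + 2^m\vec e_j = 3^q\vec e_j + h\sum_{\ell\neq j}\vec e_\ell$ (using $h + 2^m = 3^q$). The key structural observation is that this mixed generator is the \emph{only} one carrying a non-zero $\vec e_j$-component, so in any non-negative expansion its coefficient is forced by matching the $\vec e_j$-coordinate of the target vector.

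I would then split into two cases. When $k \neq j$, the $\vec e_j$-coordinate of $\vec e_k + \epsilon\vec c$ equals $\epsilon$, forcing the coefficient $\epsilon/3^q$ on the mixed generator; subtracting this multiple leaves $\vec e_k + \epsilon\tfrac{2^m}{3^q}\sum_{\ell\neq j}\vec e_\ell$, which is manifestly a non-negative combination of the $\vec e_\ell$ with $\ell \neq j$, using only $2^m/3^q \geq 0$. This case is routine. The case $k = j$ is the crux: matching the $\vec e_j$-coordinate $1+\epsilon$ forces the coefficient $(1+\epsilon)/3^q$ on the mixed generator, and subtracting it leaves the residual $\bigl(\epsilon - (1+\epsilon)h/3^q\bigr)\sum_{\ell\neq j}\vec e_\ell$. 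Non-negativity of this remaining coefficient is exactly the inequality $(1+\epsilon)2^m \geq 3^q$, equivalently $2^m/3^q \geq 1/(1+\epsilon)$.

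The main obstacle is thus establishing this last inequality, and here the quantitative strength of Fact \ref{needed}(2) is precisely what is needed: the estimate $2^m/3^q \geq 1 - \epsilon/2$ together with the lemma's hypothesis $\epsilon \leq 1$ yields $1 - \epsilon/2 \geq 1/(1+\epsilon)$ (an inequality equivalent, after clearing denominators, to $\epsilon(1-\epsilon) \geq 0$), whence $2^m/3^q \geq 1/(1+\epsilon)$ as required. I expect the bookkeeping in the two cases to be entirely mechanical once the generators are written out; the only genuine content is recognizing that the case $k=j$ is what dictates the approximation quality demanded in Fact \ref{needed}(2), and that the bound $\epsilon \leq 1$ in the hypothesis is exactly what makes the weaker estimate $2^m/3^q \geq 1-\epsilon/2$ sufficient.
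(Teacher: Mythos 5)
Your proof is correct and follows essentially the same route as the paper: both reduce the cone inclusion to checking the generators $M_\epsilon \vec e_k = \vec e_k + \epsilon \vec c$, dispose of the case $k \neq j$ trivially, and settle the crucial case $k = j$ by the quantitative bound of Fact \ref{needed} (2) together with the hypothesis $\epsilon \leq 1$. Indeed your key inequality $(1+\epsilon)2^m \geq 3^q$ is algebraically identical to the paper's comparison of coefficient quotients $h/2^m \leq \epsilon$; the difference is purely presentational (explicit coordinate bookkeeping versus projectivized segments $[\vec c, \vec e_k]$).
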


\begin{proof}
We recall from the previous subsection that
 $M_\epsilon \cdot \vec c =  (1 + d \epsilon) \vec c$ and 
$M_\epsilon \cdot \vec e_k = \vec e_k + \epsilon \vec c$ for all $k \in \{1, \ldots, d\}$.
We note that both, $M_\epsilon$ and $M'_{\epsilon, j}$ fix the vector $\vec c$ up to a scalar multiple and map the vector $\vec e_k$, for any $k \in \{1, \ldots, d\}$, to a linear combination of $\vec c$ and $\vec e_k$. Hence the claim follows if we show for any $k$
that the projectivized line segment $[\vec c, \vec e_k]$ is mapped by $M'_{\epsilon, j}$ to a larger segment than by $M_\epsilon$. 

For $k \neq j$ this is clear, since the projectivized line segment $[\vec c, \vec e_k]$ is set-wise fixed by $M'_{\epsilon, j}$.
For $k = j$ we compute the coefficient quotient of $M'_{\epsilon, j} \cdot \vec e_j$ 
and apply the inequalities from Fact \ref{needed} (2) to obtain:
$$\frac{h}{2^m} = \frac{3^q - 2^m}{2^m} = \frac{3^q}{2^m} - 1 \leq \frac{2}{2-\epsilon} - 1 = \frac{\epsilon}{2-\epsilon}$$
In comparison, the analogous coefficient quotient of $M_\epsilon \cdot \vec e_j$  is equal to $\epsilon$ and thus strictly bigger, since we can assume that $\epsilon$ is small. This shows that $M_\epsilon \cdot \vec e_j$ defines a point on the $M'_{\epsilon, j}$-image of the projectivized line segment $[\vec c, \vec e_j]$.
\end{proof}

We now consider an infinite sequence of positive constants $\epsilon_1, \epsilon_2, \ldots$ as exhibited in the previous subsection, for which the limit intersection of the nested sequence of the cones $M_{\epsilon_1} \cdot M_{\epsilon_2} \cdot \ldots \cdot M_{\epsilon_n}(\R^d_{\geq 0})$ is simplicial of dimension $d$.

For any $\epsilon_n > 0$ let $m_n, q_n$ and $h_n$ be integers as in Fact \ref{needed} (2), and for any choice of indices $j(n) \in \{1, \ldots, d\}$ let 
$$\sigma'_{n,j(n)} := \rho^{q_n}_{j(n)} \tau^{h_n}_{j(n)} \theta'_{j(n)}\,^{m_n}$$
be the corresponding substitution as introduced above. We denote its incidence matrix by
$M'_n := M_{\sigma'_{n,j(n)}}$, 
and obtain directly from Lemma \ref{include} that for any $n \in \N$ one has:
$$M_{\epsilon_1} \cdot M_{\epsilon_2} \cdot \ldots \cdot M_{\epsilon_n}(\R^d_{\geq 0}) \subset M'_{1} \cdot M'_{2} \cdot \ldots \cdot M'_{n}(\R^d_{\geq 0})$$
It follows that the same inclusion is true for the limit cones, so that the limit cone for the $\sigma'_{n,j(n)}$ must also have dimension $d$.

It remains to specify the indices $j(n)$: if they are chosen so that $j(n)$ varies cyclically through the set of all indices $1, 2, \ldots, d$, then the product of any subsequent $d$ incidence matrices is positive, which suffices to guaranty that the resulting directive sequence is weekly primitive. 

\smallskip

It follows as in the previous subsection that the $S$-adic subshift defined by the directive sequence of $\sigma'_{n,j(n)}$, for the finite set $S$ given above, is minimal and has $d$ ergodic probability measures.

\begin{rem}
\label{more-general}
Similar to Remark \ref{variations} we note that there are many possible variations of the construction presented in this subsection. For examples the exponents of the powers $\rho_j(a_j)$ and $\theta_j(a_j)$ can be arbitrary relative prime integers, and $\tau_j(a_j)$ can be any word that involves every letter of $\cal A$ a fixed number of times.
\end{rem}

\begin{rem}
\label{5-subs}
We now observe that any two of the substitutions $\rho_j$ and $\rho_{j+1}$ are conjugate to each other, through a cyclic permutation $\pi: a_j \mapsto a_{j+1}$ (for $j$ understood modulo $d$). 
The same holds for the $\theta_j$ and for the $\tau_j$. 
It follows 
that the above constructed directive sequence $\sigma' := \sigma'_{0,j(0)} \circ \sigma'_{1,j(1)} \circ \ldots$ can be understood as obtained through telescoping from a directive sequence $\sigma'' = \sigma''_{0} \circ \sigma''_{1} \circ \ldots$ with same associated subshift $X_{\sigma'} = X_{\sigma''}$, where all $\sigma''_n$ belong to $S = \{\rho_1, \theta_1, \tau_1, \pi\}$. This proves the statement (2) of Proposition \ref{d-measures}.
\end{rem}

\begin{rem}
\label{comment-referee}
The relevance of statement (2) of Proposition \ref{d-measures} is emphasized by a comparison with F. Durand's results in \cite{Dur}, where it is shown that a subshift with Bratteli-Vershik representation based on a finite set of positive incidence matrices is linearly recurrent and hence uniquely ergodic.

In our construction above it is crucial that the 4 generating substitutions $\rho_1, \theta_1, \tau_1, \pi$ are not positive (indeed, not even primitive), so that suitable products of them, as for example the ones pointed out in the above proof, allows one to compose an infinite set of positive matrices without (linear or else) bound on the resulting recurrencence.
\end{rem}

%%%%%%%%%%%%%%%%
\subsection{Some questions}
\label{questions-chapter4}

${}^{}$

Inspired by Remark \ref{5-subs} we define 
for any set $S$ 
of substitutions of some free monoid $\cal A^*$
the {\em ergodic size} $\eta(S)$ 
as 
the maximal number of ergodic probability measures supported by any subshift $X \subset \Sigma_\cal A$ which admits an $S$-adic expansion. One can specify this further to a {\em min-ergodic size} $\eta_{\rm min}(S)$ by adding the additional requirement that $X$ be minimal. This gives 
$$
1 \leq \eta(S) \leq \card \cal A
\quad \text{and} \quad\eta_{\rm min}(S) \leq \eta(S)
$$
for any $S \neq \emptyset$, as well as
$$\eta_{\rm min}(S') \leq \eta_{\rm min}(S) \quad \text{and} \quad\eta(S') \leq \eta(S)$$
for any subset $S' \subset S$.
We say that $S$ is 
{\em ergodically rich} if $\eta_{\rm min}(S) \geq 2$; if $\eta(S) = 1$, we call $S$ {\em uniquely ergodic}.

\smallskip

In Corollary 10.10 of \cite{BHL1} it has been shown that for any directive sequence $\sigma$ with stationary incidence matrix $M$ the number of ergodic probability measures on the associated subshift $X_\sigma$ depends only on $M$ and not on the particular choice of $\sigma$. It follows that for the finite set $S_M$ of substitutions $\sigma_i$ with $M_{\sigma_i} = M$ the ergodic size satisfies $\eta(S) = 1$, if $M$ is chosen suitably (for example primitive). 

On the other hand, 
the min-ergodic size of the 4-elements set $S$ exhibited in Remark \ref{5-subs} is shown there to satisfy $\eta_{\rm min}(S) = d$, thus motivating the following:

\begin{question}
\label{ergodic-size}
(1)
Given integers $k \geq 1$ and $d \geq 1$, what are the ``generic'' values for $\eta(S)$ and $\eta_{\rm min}(S)$
of any set $S$ that consists of at most $k$ substitutions over an alphabet of $d$ letters ?

\smallskip
\noindent
(2)
More specifically, does there exist an ergodically rich set $S$ that consists of 2 primitive substitutions ?
\end{question}

%%%%%%%%%%%%%%%

\section{Determination of cylinder weights for substitutions}
\label{cylinder-determination}

Let 
$\cal A = \{a_1, \ldots, a_d\}$ 
be an alphabet, and let $\sigma: \cal A^* \to \cal A^*$ be a substitution which is everywhere growing, but possibly reducible. As before we denote by $X_\sigma \subset \Sigma_\cal A$ the subshift associated to $\sigma$, and by $\mu$ an invariant measure on $X_\sigma$. Recall that for any $w \in \cal A^*$ the cylinder determined by $w$ is denoted by $[w]$.

\begin{question}
\label{cylinder-measure-quest}
How can one determine the measure $\mu([w])$ for an arbitrary word $w \in \cal A^*$ ?
\end{question}

We will now describe a (fairly practical) answer to this question, based on our previous results. 
Along the description of this algorithm, we will illustrate the main steps on the two (famous) examples of the Thue-Morse substitution 
$\sigma_{\rm TM}$ and the Fibonacci substitution $\sigma_{\rm Fib}$:
$$\begin{array}{rcllrcl}
\sigma_{\rm TM}: a & \mapsto & ab & \qquad & \sigma_{\rm Fib}: a & \mapsto & ab \\
                 b & \mapsto & ba &             &                  b & \mapsto & a 
\end{array}
$$

The algorithm we 
present 
consists of three steps:

\smallskip
\noindent
\underline{\bf Step 1:} We first compute from the given words $\sigma(a_i)$ the incidence matrix $M_{\sigma} = (m_{x,y})_{x, y \in \cal A}$, where $m_{x, y} := |\sigma(y)|_x$ denotes the number of occurrences of $x$ in the word $\sigma(y)$. 

We then pass to the {\em augmented incidence matrix} 
$$M^+_{\sigma} : = (m^+_{X,Y})_{X, Y \in 
\cal A_2}$$
which is defined 
as follows: Its rows and columns are indexed by the set 
$\cal A_2$
of all words in $\cal A^*$ of length 1 or 2, and it contains $M_{\sigma}$ as diagonal block: $m^+_{x, y} := m_{x,y}$ for all $x, y \in \cal A$. The complementary diagonal block is a 
``pre-permutation matrix'' (i.e. every column contains 
precisely 
one non-zero entry, and the latter is equal to 1), defined by the rule that for any word $X = x_1 x_2$ of length 2 the coefficient $m^+_{X,Y}$ is equal to $1$ if and only if for $Y = y_1 y_2$ the letter 
$x_1$ is the last letter of $\sigma(y_1)$ and the letter $x_2$ is the first letter of $\sigma(y_2)$. 
Otherwise one sets $m^+_{X,Y}=0$.

The off-diagonal blocks are defined by the rule that $m^+_{X,Y} = 0$ if $|X| = 1$ and $|Y| = 2$, while for $|X| = 2$ and $|Y| = 1$ one sets $m^+_{X,Y} := |\sigma(Y)|_X$.

\begin{example}
\label{augmented-matrix}
For the Thue-Morse substitution $\sigma_{\rm TM}$
and the Fibonacci substitution $\sigma_{\rm Fib}$
one has $M_{\sigma_{\rm TM}} = 
\left[
\begin{array}{cc}
1 & 1 \\
1 & 1
\end{array} \right]$
and
$M_{\sigma_{\rm Fib}} = 
\left[
\begin{array}{cc}
1 & 1 \\
1 & 0
\end{array} \right]$, 
and 
we obtain, for the row and column indexes given by $(a, b, aa, ab, ba, bb)$:
$$
M^+_{\sigma_{\rm TM}} = 
\left[
\begin{array}{cc|cc:cc}
1 & 1 & 0 & 0 & 0 & 0 \\
1 & 1 & 0 & 0 & 0 & 0 \\
\hline
0 & 0 & 0 & 0 & 1 & 0 \\
1 & 0 & 0 & 0 & 0 & 1 \\
\hdashline
0 & 1 & 1 & 0 & 0 & 0 \\
0 & 0 & 0 & 1 & 0 & 0 
\end{array} 
\right],
\qquad
M^+_{\sigma_{\rm Fib}} = 
\left[
\begin{array}{cc|cc:cc}
1 & 1 & 0 & 0 & 0 & 0 \\
1 & 0 & 0 & 0 & 0 & 0 \\
\hline
0 & 0 & 0 & 0 & 1 & 1 \\
1& 0 & 0 & 0 & 0 & 0 \\
\hdashline
0& 0 & 1 & 1 & 0 & 0 \\
0 & 0 & 0 & 0 & 0 & 0 
\end{array} 
\right].$$
\end{example}

\begin{rem}
\label{powers}
It is not hard to verify that with this definition one obtains for any two substitutions $\sigma, \sigma'$ the equality $M^+_{\sigma} \cdot M^+_{\sigma'} = M^+_{\sigma \circ \sigma'}$. Thus we have in particular, for any integer $n \in \N$:
$$M^+_{\sigma^n} = 
({M^+_{\sigma}})^n$$
\end{rem}

\begin{rem}
\label{Kronecker}
The augmented incidence matrix $M_\sigma^+$ is 
lower triangular by blocks, with two diagonal blocks. 
The first diagonal block is given by the incidence matrix $M_\sigma$. 
The second diagonal block can be described as a Kronecker product 
$S_\sigma\otimes P_\sigma$.
For this, we order lexicographically the elements of 
$\cal A_2 \smallsetminus \cal A$
which serve as indices for the second diagonal block.
Now 
$P_\sigma=(p_{xy})_{x,y\,\in\mathcal A}$ is 
the ``prefix matrix'' of $\sigma$: we set
$p_{xy}=1$ if $x$ is the first letter of $\sigma(y)$, and otherwise $p_{xy}=0$.
Similarly, $S_\sigma=(s_{xy})_{x,y\,\in\mathcal A}$ is ``suffix matrix'' of $\sigma$: 
$s_{xy}=1$ if $x$ is the last letter of $\sigma(y)$, and otherwise $s_{xy}=0$.

As a consequence, we 
notice that
the spectrum of $M_\sigma^+$ is given by the spectrum of $M_\sigma$ plus possibly $0$ and some $d'$-th roots of the unity, for $d' \leq d$.
\end{rem}

\begin{example}
For the Thue-Morse substitution $\sigma_{\rm TM}$ and the Fibonacci substitution $\sigma_{\rm Fib}$, we get:
$$
S_{\sigma_{\rm TM}} = 
\left[
\begin{array}{cc}
0 & 1 \\
1 & 0
\end{array} 
\right],
\;\;
P_{\sigma_{\rm TM}} = 
\left[
\begin{array}{cc}
1 & 0 \\
0 & 1
\end{array} 
\right],
\;\;
S_{\sigma_{\rm Fib}} = 
\left[
\begin{array}{cc}
0 & 1 \\
1 & 0
\end{array} 
\right],
\;\;
P_{\sigma_{\rm Fib}} = 
\left[
\begin{array}{cc}
1 & 1 \\
0 & 0
\end{array} 
\right].
$$
\end{example}

\medskip
\noindent
\underline{\bf Step 2:}
For any word $w \in \cal A^*$ 
we define 
an integer
$n \geq 0$ 
to be {\em $(\sigma, w)$-large} if 
$|\sigma^n(a_i)|\geq |w|- 1$ 
for all $a_i \in \cal A$. For such $n$ we now define 
an {\em occurence vector} 
$\vec v(w)_n = (v_X)_{X \in 
A_2}$ 
with coefficients $v_X$ defined by the rule 
$v_X = |\sigma^n(X)|_w$ if $|X| = 1$, and 
$v_X = |\sigma^n(X)|_w - |\sigma^n(x_1)|_w - |\sigma^n(x_2)|_w$ 
if $X = x_1 x_2$ has length 2. 
Thus, in the latter case, the coefficient $v_X$ counts the number of times that $w$ occurs as factor of 
$\sigma^n(x_1) \sigma^n(x_2)$, 
but ignores those occurrences that are either completely contained in the first or completely contained in the second factor of this product.

\begin{rem}
\label{word-vector}
(1)
If $w$ has length 1 or 2, then independently of $\sigma$ one has that $n = 0$ is $(\sigma, w)$-large. The vector $\vec v(w)_0$ has only zero coefficients except for the coordinate $X = w$, where it is equal to 1.

\smallskip
\noindent
(2)
It is not hard to verify (using Remark \ref{powers}) that the above defined {$n$-th occurrence vector} $\vec v(w)_n$ satisfies, for any 
two $(\sigma, w)$-large integers $n \geq m$
the equality
\begin{equation}
\label{n-minus-m}
^t\vec v(w)_n = \,^t\vec v(w)_m  \cdot (M^+_{\sigma})^{n-m} \, .
\end{equation}
\end{rem}

\begin{example}
\label{occurrence-vectors}
(1)
We now consider the substitutions $\sigma_{\rm TM}$ and $\sigma_{\rm Fib}$ from Example \ref{augmented-matrix}
and any word $w$ of length $|w| = 2$. We know from Remark \ref{word-vector} (1) that any $n \geq 0$ is $(\sigma_i, w)$-large, and as warm-up exercise we consider $n = 1$ for $\sigma_{\rm TM}$ and $n = 2$ for $\sigma_{\rm Fib}$.
For the same ordering of coordinates as in Example \ref{augmented-matrix} this gives for $\sigma_{\rm TM}$ the following occurrence vectors: 
\[ \begin{array}{|c|c|c|c|}
\hline
^t\vec v(aa)_1&^t\vec v(ab)_1&^t\vec v(ba)_1&^t\vec v(bb)_1\\
\hline
 (0, 0, 0, 0, 1, 0) &(1, 0, 0, 0, 0, 1)&(0, 1, 1, 0, 0, 0)& (0, 0, 0, 1, 0, 0)\\
 \hline
\end{array}\]

For $\sigma_{\rm Fib}$ we obtain the vectors 
\[ \begin{array}{|c|c|c|c|}
\hline
^t\vec v(aa)_2&^t\vec v(ab)_2&^t\vec v(ba)_2&^t\vec v(bb)_2\\\hline
 (0, 0, 1, 1, 0, 0) &(1, 1, 0, 0, 0, 0)&(1, 0, 0, 0, 1, 1)& (0, 0, 0, 0, 0, 0)\\
 \hline
\end{array}\]
For both sets of occurrence vectors one uses Remark \ref{word-vector} (1) to quickly verify the equation (\ref{n-minus-m}), for $m = 0$ and $n$ as above.

\smallskip
\noindent
(2) We now want to illustrate how to deal with a more elaborate case, by picking ``at random'' a slightly longer word:
$$w = baabab$$
One first iterates the given substitution on the generators, until their images are longer or equal to $|w|-1 = 5$. In our two cases this gives:
$$\begin{array}{rcllrcl}
\sigma^3_{\rm TM}: a & \mapsto & abbabaab & \qquad & \sigma^4_{\rm Fib}: a & \mapsto & abaababa \\
                        b & \mapsto & baababba &             &                  b & \mapsto & abaab 
\end{array}
$$
We now compute for $\sigma_{\rm TM}$: 
$$
v_a := |\sigma_{\rm TM}^3(a)|_w = 0, \,\,\, v_b := |\sigma_{\rm TM}^3(b)|_w = 1, \,\,\,
v_{aa} := |\sigma_{\rm TM}^3(aa)|_w = 1, \,\,\, 
$$
$$
v_{ab} := |\sigma_{\rm TM}^3(ab)|_w = 1, \,\,\, v_{ba} := |\sigma_{\rm TM}^3(ba)|_w = 1, \,\,\,  v_{bb} := |\sigma_{\rm TM}^3(bb)|_w = 2
$$
Similarly, we obtain for $\sigma_{\rm Fib}$: 
$$
v_a := |\sigma_{\rm Fib}^4(a)|_w = 1, \,\,\, v_b := |\sigma_{\rm Fib}^4(b)|_w = 0, \,\,\,
v_{aa} := |\sigma_{\rm Fib}^4(aa)|_w = 2, \,\,\, 
$$
$$
v_{ab} := |\sigma_{\rm Fib}^4(ab)|_w = 1, \,\,\, v_{ba} := |\sigma_{\rm Fib}^4(ba)|_w = 2, \,\,\,  v_{bb} := |\sigma_{\rm Fib}^4(bb)|_w = 1
$$
We then use the general formula for the occurrence vector
$$
^t\vec v(w)_n = (v_a, v_b, v_{aa} - 2 v_a, v_{ab} - v_a - v_b, v_{ba} - v_b - v_a, v_{bb} - 2 v_b) 
$$
to obtain for $\sigma_{\rm TM}$
$$
^t\vec v(aabab)_3 = (0, 1, 1, 0, 0, 0)\, ,
$$
and for $\sigma_{\rm Fib}$
$$
^t\vec v(aabab)_4 
= (1, 0, 0, 0, 1, 1) \, .
$$
\end{example}

\smallskip
\noindent
\underline{\bf Step 3:}
We know from Proposition \ref{mieux-que-moulinette} 
that every ergodic invariant measure $\mu_i$ on the substitution subshift $X_\sigma \subset \Sigma_\cal A$ is given 
for some suitable exponent $k \geq 1$
by a non-negative column 
eigenvector $\vec v_i = (v_x)_{x \in \cal A}$ of $M^k_{\sigma}$ with eigenvalue $\lambda_i > 1$, 
with the property $v_x = \mu([x])$ for any $x \in \cal A$ (see Lemma \ref{nested}). The converse statement also holds.

Since for the matrix $M^+_\sigma$ all eigenvalues $\lambda_j$ 
of the lower $d^2 \times d^2$ diagonal block are of modulus $|\lambda_j| \leq 1$, it follows from standard Perron-Frobenius theory for non-negative matrices (see Appendix 11.3 of \cite{BHL1}) 
that any eigenvector $\vec v_i$ of $M_\sigma^k$ as above 
determines uniquely a non-negative eigenvector $\vec v^+_i$ of $(M^+_{\sigma})^k$, which contains $\vec v_i$ as ``subvector'' at the upper $d$ coordinates (i. e. the coordinates for $\cal A \subset \cal A_2$), and which has the same eigenvalue $\lambda_i$.

We have thus explained all terms used in Proposition \ref{cylinder-formula}, which we abbreviate here to:

\begin{prop}
\label{cylinder-formula2}
For any convex combination $\mu = \sum c_i \mu_i$ of the above ergodic measures $\mu_i$ the measure of the cylinder $[w]$ is given by the scalar product
(written as matrix multiplication)
$$\mu([w]) = 
\,^t\vec v(w)_n \cdot \sum c_i \frac{1}{\lambda_i^n} \vec v^+_i \, .$$
\end{prop}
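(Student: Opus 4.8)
The plan is to reduce first, by linearity, to the case of a single ergodic measure, and then to recognise the column $\frac{1}{\lambda_i^n}\vec v_i^+$ as the ``weight vector'' that Corollary \ref{cylinder-computation} pairs with the occurrence vector. Since cylinder measure is additive, $\mu([w]) = \sum_i c_i\,\mu_i([w])$, and the occurrence vector $\vec v(w)_n$ depends only on $\sigma$, $w$ and $n$ (not on $i$), it suffices to establish $\mu_i([w]) = \,^t\vec v(w)_n\cdot\frac{1}{\lambda_i^n}\vec v_i^+$ for each ergodic $\mu_i$ and then sum against the $c_i$. So I fix $i$ and let $\vec v_i$ be the associated distinguished non-negative eigenvector, normalised by $\vec v_i(a) = \mu_i([a])$ as in Lemma \ref{nested}; I use that $\vec v_i$ is a genuine eigenvector of $M_\sigma$, say $M_\sigma\vec v_i = \lambda_i\vec v_i$, the auxiliary power $M_\sigma^k$ in Proposition \ref{mieux-que-moulinette} serving only to extract $\vec v_i$ by Perron--Frobenius.

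First I would write down the $\sigma$-compatible vector tower realising $\mu_i$. For the stationary directive sequence $\sigma_n = \sigma$ the compatibility condition is $\vec v_{n-1} = M_\sigma\vec v_n$, so I take $\bvec v = (\vec v_n)_{n\geq 0}$ with $\vec v_n = \frac{1}{\lambda_i^n}\vec v_i$; indeed $M_\sigma\vec v_n = \frac{1}{\lambda_i^{n-1}}\vec v_i = \vec v_{n-1}$ and ${\frak m}(\bvec v) = \mu_i$. Applying Corollary \ref{cylinder-computation} to this tower at any $(\sigma,w)$-large level $n$, and using $\sigma_{[0,n)} = \sigma^n$, the right-hand side is the sum of $\sum_{a}\vec v_n(a)\,|\sigma^n(a)|_w$ and $\sum_{a,a'}\omega^n_{a,a'}\bigl(|\sigma^n(aa')|_w - |\sigma^n(a)|_w - |\sigma^n(a')|_w\bigr)$. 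These two families of factors are exactly the upper and lower coordinates of the occurrence vector $\vec v(w)_n$ of Step 2, so $\mu_i([w]) = \,^t\vec v(w)_n\cdot\vec w_n$, where $\vec w_n$ is the column vector with upper block $\vec v_n$ and lower block $(\omega^n_{a,a'})_{a,a'}$.

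It then remains to identify $\vec w_n$ with $\frac{1}{\lambda_i^n}\vec v_i^+$, and this is where the content lies. Here I would exploit the defining limit (\ref{local-weights}): substituting the geometric tower $\vec v_k = \frac{1}{\lambda_i^k}\vec v_i$ and reindexing by $j = k-n$ gives $\omega^n_{a,a'} = \frac{1}{\lambda_i^n}\,\Omega_{a,a'}$ with $\Omega_{a,a'} := \lim_{j\to\infty}\frac{1}{\lambda_i^{j}}\sum_{b}\vec v_i(b)\,|\sigma^{j}(b)|_{aa'}$ \emph{independent of $n$}, the limit existing by Theorem \ref{thm1}(4). Thus $\vec w_n = \frac{1}{\lambda_i^n}(\vec v_i;\vec\Omega)$ for a fixed non-negative vector $(\vec v_i;\vec\Omega)$. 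Feeding $\omega^m = \frac{1}{\lambda_i^m}\vec\Omega$ into the weight-transition identity (\ref{weight-transition}) and clearing the factor $\lambda_i^{-n}$ shows $\lambda_i\Omega_{c,c'} = \sum_{(c,c') = \sigma^+(a,a')}\Omega_{a,a'} + \sum_{a}\vec v_i(a)\,|\sigma(a)|_{cc'}$, which is precisely the lower-block row of $M_\sigma^+(\vec v_i;\vec\Omega) = \lambda_i(\vec v_i;\vec\Omega)$; the upper-block row is $M_\sigma\vec v_i = \lambda_i\vec v_i$. Hence $(\vec v_i;\vec\Omega)$ is a non-negative eigenvector of $M_\sigma^+$ of eigenvalue $\lambda_i$ with upper part $\vec v_i$, and by uniqueness of such a prolongation (Step 3, using that $S_\sigma\otimes P_\sigma$ has spectral radius at most $1 < \lambda_i$ by Remark \ref{Kronecker}) it equals $\vec v_i^+$. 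Therefore $\vec w_n = \frac{1}{\lambda_i^n}\vec v_i^+$, and summing over $i$ against the $c_i$ yields the stated formula; its independence of the chosen $(\sigma,w)$-large $n$ is the compatibility of (\ref{n-minus-m}) with $M_\sigma^+\vec v_i^+ = \lambda_i\vec v_i^+$.

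The main obstacle I anticipate is exactly this identification: one must see that the \emph{analytically} defined weights $\omega^n_{a,a'}$ of (\ref{local-weights}) coincide with the \emph{algebraically} defined prolongation $\vec v_i^+$. The clean point that makes this work for a substitution, rather than a general directive sequence, is that the tower is geometric, $\vec v_n = \lambda_i^{-n}\vec v_i$, which forces $\omega^n$ to be geometric as well; the eigenvector property then drops out of (\ref{weight-transition}), and uniqueness of the non-negative prolongation closes the argument. A secondary point to handle with care is the claim that $\vec v_i$ is a genuine $M_\sigma$-eigenvector and not merely an $M_\sigma^k$-eigenvector, without which the tower would fail to be geometric in $\lambda_i^{-n}$; this follows from the $M_\sigma$-invariance of the cone $\cal C_\infty$ in Proposition \ref{mieux-que-moulinette}.
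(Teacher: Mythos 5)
Your argument is correct and is essentially the paper's own proof: reduce by linearity to a single ergodic $\mu_i$, realise $\mu_i$ by the geometric $\sigma$-compatible tower $\bigl(\frac{1}{\lambda_i^n}\vec v_i\bigr)_{n \geq 0}$, and evaluate with Corollary \ref{cylinder-computation}. The only difference is one of completeness: the identification $\omega^n_{a,a'} = \frac{1}{\lambda_i^n}\vec v_i^+(aa')$, which you derive from (\ref{local-weights}) and (\ref{weight-transition}) together with the uniqueness of the non-negative prolongation of $\vec v_i$, is exactly the step the paper compresses into the assertion that the output of Corollary \ref{cylinder-computation} ``is precisely'' the stated formula.
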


\begin{proof}
It suffices to prove $\mu_i([w]) = \,^t\vec v(w)_n \cdot\frac{1}{\lambda_i^n} \vec v^+_i $ for a single ergodic measure $\mu_i$. 
But this is precisely the formula from Corollary \ref{cylinder-computation} for the measure 
$\mu_i = \mu^{\tiny \bvec v_{\!\!  i}}$ 
given by 
a 
vector tower 
$\bvec v_{\! \! i}$  
that 
is compatible with the stationary directive sequence defined by the substitution $\sigma$, where $\bvec v_{\!\! i}$ is defined by the above eigenvector $\vec v_i$ through setting 
$\bvec v_{\! \!i} = (\frac{1}{\lambda_i^n} \vec v_i)_{n \in \N \cup\{0\}}$.
\end{proof}

\begin{rem}\label{rem-cylindres2}
Note 
that 
for any measure $\mu_i$ as above 
the associated right eigenvector $\vec v_i^+$ of the augmented matrix 
$M^+_\sigma$ 
gives directly the measures of the cylinders of size 
1 and 2. 
For the words of length 
1 this is 
is stated in Lemma \ref{nested}, and 
for the words $X$ of length 2 this follows from 
Proposition \ref{cylinder-formula2}, since $\vec v(X)_0$ is the unit vector for the coordinate $X$, and $0$ is $(\sigma, X)$-large for any everywhere growing $\sigma$ and $X$ of length 2. 

\end{rem}

\begin{example}
\label{formula-applied}
The two substitutions $\sigma_{\rm TM}$ and $\sigma_{\rm Fib}$ from Example \ref{augmented-matrix} are primitive and hence uniquely ergodic. 
For 
the Thue-Morse substitution 
$\sigma_{\rm TM}$ 
on the full shift $\Sigma_{\{a,b\}}$, 
with incidence matrix 
$M_{\sigma_{\rm TM}} =
\left[
\begin{array}{cc}
1 & 1 \\
1 & 1
\end{array} 
\right]$, 
the probability measure $\mu_{\rm TM}$ 
is defined by the Perron-Frobenius eigenvector 
$ \frac{1}{2}
\left[
\begin{array}{c}
1 \\
1
\end{array} 
\right]$
with eigenvalue $\lambda_{\rm TM} = 2$.
For the Fibonacci substitution $\sigma_{\rm Fib}$ 
with incidence matrix
$M_{\sigma_{\rm Fib}} =
\left[
\begin{array}{cc}
1 & 1 \\
1 & 0
\end{array} 
\right]$
we obtain analogously the measure $\mu_{\rm Fib}$ through 
the eigenvector 
$\frac{1}{\varphi^2}
\left[
\begin{array}{c}
\varphi \\
1
\end{array} 
\right]$
with eigenvalue $\lambda_{\rm Fib} = \phi$, for 
the golden mean 
$\phi := \frac{1 + \sqrt{5}}{2}$.
For the augmented incidence matrices $M^+_{\sigma_{\rm TM}}$ and $M^+_{\sigma_{\rm Fib}}$ computed in Example \ref{augmented-matrix} we thus calculate the augmented eigenvectors (written transposed) as 
$^t\vec v_{\rm TM}^+ = \frac{1}{2} \, ^t(1, 1, \frac{1}{3}, \frac{2}{3}, \frac{2}{3}, \frac{1}{3})$ 
and 
$^t\vec v_{\rm Fib}^+ = \frac{1}{\varphi^2} ^t(\varphi, 1, \varphi^{-1}, 1, 1, 0)$. 
Hence using 
Remark \ref{word-vector} (1) 
we can now evaluate the formula in Proposition \ref{cylinder-formula2} to obtain:

\[ \begin{array}{|c|c|c|c|c|c|c|}
\hline
 w&a&b&aa&ab&ba&bb\\
\hline
\rule [-0.3cm]{0mm}{0.8cm}
\mu_{\rm TM}([w])&\frac{1}{2}&\frac{1}{2}&\frac{1}{6}&\frac{1}{3}&\frac{1}{3}&\frac{1}{6}\\
 \hline
\end{array}\]

\noindent
as well as 

\[ \begin{array}{|c|c|c|c|c|c|c|}
\hline
 w&a&b&aa&ab&ba&bb\\
\hline
\rule [-0.3cm]{0mm}{0.8cm}
\mu_{\rm Fib}([w])&\frac{1}{\varphi}&\frac{1}{\varphi^2}&\frac{1}{\varphi^3}&\frac{1}{\varphi^2}&\frac{1}{\varphi^2}&0\\
 \hline
\end{array}\]

\medskip

It is not hard to check 
that Remark \ref{rem-cylindres2} gives the same answer, and is 
slightly more direct.
But 
in order 
to compute the measures of cylinders of size bigger or equal to 3, 
Remark \ref{rem-cylindres2} does not apply, 
so that 
one needs to fall back onto 
Proposition \ref{cylinder-formula2}. 
For example, for the word $w = baabab$ considered in Example \ref{occurrence-vectors} (3) we obtain 
$$
\mu_{\rm TM}([baabab]) = \frac{1}{8} \,^t\vec v(baabab)_3 \cdot \vec v_{\rm TM}^+ = 
\frac{1}{12}$$
and 
$$
\mu_{\rm Fib}([baabab]) = \frac{1}{\phi^4} \,^t\vec v(baabab)_4 \cdot \vec v_{\rm Fib}^+ = \frac{1}{\phi^4}\, .$$

For completeness we mention the fact 
that all the cylinders of length $3$ have the same measure for Thue-Morse subshift, or else they have measure 0.
\end{example}

%%%%%%%%%%%%%%%%%%%%

\section{Examples}
\label{section-examples}

We conclude this 
paper 
with three 
slightly more challenging 
examples, concerning non-primitive everywhere growing substitutions.

%%%%%%
\subsection{An example with a periodic sequence in the substitution subshift}
\label{first-example}

${}^{}$

The methods developed in \cite{BKMS} don't seem to work for substitutions
with periodic leaves in their associated subshift.
Such a restriction doesn't hold for the technology presented here, as is illustrated by the following.

Let $\sigma$ be the (non-primitive) substitution given by:
$$\begin{cases} a\mapsto acbca \\ b\mapsto ba \\ c\mapsto cc \end{cases}$$
We compute the incidence matrix and both, the suffix and the prefix matrix for $\sigma$:
$$
M_{\sigma} = 
\left[
\begin{array}{ccc}
2 & 1 & 0 \\
1 & 1 & 0  \\
2 & 0 & 2
\end{array} 
\right],
\;\;
S_{\sigma} = 
\left[
\begin{array}{ccc}
1 & 1 & 0 \\
0 & 0 & 0 \\
0 & 0 & 1
\end{array} 
\right],
\;\;
P_{\sigma} = 
\left[
\begin{array}{ccc}
1 & 0 & 0 \\
0 & 1 & 0 \\
0 & 0 & 1
\end{array}
\right].
$$
The occurrences of words of length 2 in the 
generator images $\sigma(a), \sigma(b)$ and $\sigma(c)$, 
which serve to determine the columns of the lower left off-diagonal block of 
the augmented incidence matrix, are given respectively 
(with indices in lexicographic order) by:
$$
^t(0, 0, 1 \,\vdots\, 0, 0, 1 \,\vdots\, 1, 1, 0),\, ^t(0, 0, 0 \,\vdots\, 1, 0, 0 \,\vdots\, 0, 0, 0),\, ^t(0, 0, 0 \,\vdots\, 0, 0, 0 \,\vdots\, 0, 0, 1)$$
We have now all ingredients to compute the
augmented incidence matrix $M_\sigma^+$, which is of size $12 \times 12$. Recall that the 
upper diagonal block (of size $3 \times 3$) is equal to $M_\sigma$, and the
lower block (of size $9\times 9$) is equal to the Kronecker product $S_\sigma \otimes P_\sigma$.
$$M_\sigma^+ = \left[
\begin{array}{ccc|ccc:ccc:ccc}
2&1&0 & 0&0&0 & 0&0&0 & 0&0&0\\
1&1&0 & 0&0&0 & 0&0&0 & 0&0&0\\
2&0&2 & 0&0&0 & 0&0&0 & 0&0&0\\
\hline
0&0&0 & 1&0&0 & 1&0&0 & 0&0&0\\ 
0&0&0 & 0&1&0 & 0&1&0 & 0&0&0\\ 
1&0&0 & 0&0&1 & 0&0&1 & 0&0&0\\
\hdashline
0&1&0 & 0&0&0 & 0&0&0 & 0&0&0\\
0&0&0 & 0&0&0 & 0&0&0 & 0&0&0\\  
1&0&0 & 0&0&0 & 0&0&0 & 0&0&0\\
\hdashline
1&0&0 & 0&0&0 & 0&0&0 & 1&0&0\\
1&0&0 & 0&0&0 & 0&0&0 & 0&1&0\\
0&0&1 & 0&0&0 & 0&0&0 & 0&0&1\\
\end{array}
\right]$$

We now observe that the incidence matrix $M_\sigma$ itself has two primitive diagonal blocks, with eigenvalue 
$\lambda_2 = 2$ for the bottom block of $M_\sigma$, and $\lambda_1 = 1 + \phi$ for the top one
(with $\phi = \frac{1 + \sqrt{5}}{2}$ as before). 
Since $1 + \phi > 2$ holds, 
the case (2) of 
Corollary \ref{2-strata-case} applies, giving rise to two non-negative eigenvectors $\vec v_1$ for $\lambda_1$ and $\vec v_2$ for $\lambda_2$,  which determine invariant ergodic probability measures $\mu_1$ and $\mu_2$ respectively on the substitution subshift $X_\sigma$. As explained in Step 3 of section \ref{cylinder-determination}, each $\vec v_i$ gives rise to a non-negative eigenvector $\vec v_i^+$ of the augmented matrix $M_\sigma^+$, which we can use to determine the measure of specific cylinders $[w]$ via Proposition \ref{cylinder-formula2}. But first we consider the words of length 1 and 2, as for their cylinders the measure is read off directly from $\vec v_i$ and $\vec v_i^+$ respectively, using Remark \ref{rem-cylindres2}:

$\bullet$ For the eigenvalue $\lambda_2 = 2$ we compute 
the eigenvector $\vec v_2$ and we obtain for the 
words of length~$1$:

$$\mu_2([a])=\mu_2([b])=0, \,\,\, \mu_2([c])=1$$
The computation of the augmented eigenvector $\vec v_2^+$ shows that every cylinder of size $2$ has measure $0$, except $[cc]$. Thus the measure $\mu_2$ is atomic: It only gives positive measure to the biinfinite periodic word 
$\ldots ccc \ldots$.

$\bullet$ For the eigenvalue $\lambda_1 = 1+\varphi = \phi^2$ we compute 
the eigenvector 
$$\vec v_1^+ = \frac{1}{3} \, {}^t(\phi-1, 2-\phi, 2 \mid 5\phi-8, 0, 
7 -4\phi \,\vdots\, 5-3\phi,  0 , 2\phi-3 \,\vdots\, 2-\phi , 2-\phi, 2\phi-2)$$ 
which 
defines the second ergodic measure $\mu_1$. We apply Remark \ref{rem-cylindres2} to obtain:

\[\begin{array}{|c|c|c|c|}
\hline
\rule [-0.1cm]{0mm}{0.5cm}
w&a&b&c\\
\hline
\rule [-0.3cm]{0mm}{0.8cm}
\mu_1([w])&\frac{\varphi-1}{3}&\frac{2-\varphi}{3}&\frac{2}{3}\\
\hline
\end{array}\]

\[\begin{array}{|c|c|c|c|c|c|c|c|c|c|}
\hline
\rule [-0.1cm]{0mm}{0.5cm}
w&aa&ab&ac&ba&bb&bc&ca&cb&cc\\
\hline
\rule [-0.3cm]{0mm}{0.8cm}
\mu_1([w])&
\frac{5\varphi-8}{3}&
0&
\frac{7-4\varphi}{3}&
\frac{5-3\varphi}{3}&
0&
\frac{2\varphi-3}{3}&
\frac{2-\varphi}{3}&
\frac{2-\varphi}{3}&
\frac{2\varphi-2}{3}\\

\hline
\end{array}\]

To compute the measure of a cylinder $[w]$ for a word $w$ of length larger than 2, say $w = bcacc$, we first iterate the given substitution to calculate $\sigma^n$ on the generators $a_i$, 
until their images have length $|\sigma^n(a_i)| \geq|w|-1 = 4$:
$$\begin{cases} \sigma^2(a) =  acbcaccbaccacbca \\ 
\sigma^2(b) = baacbca \\ 
\sigma^2(c) = cccc \end{cases}$$
We then compute the corresponding occurrence vector 
$$^t\vec v(bcacc)_2 = (1,0,0\mid 0,0,1\,\vdots\,0,0,1\,\vdots\,0, 0, 0)$$
and now, according to Proposition \ref{cylinder-formula2}, the matrix product
$$\frac{1}{\lambda_i^2}\,^t\vec v(bcacc)_2 \cdot \vec v_i^+ \, .$$
For $i = 2$ this gives of course $\mu_2([bdacc]) = 0$, while for $i=1$ we obtain:
$$\mu_1([bdacc]) = \frac{1}{\phi^4}(\frac{\varphi-1}{3} + \frac{7-4\varphi}{3} + \frac{2\varphi-3}{3}) = \frac{3-\phi}{\phi^4}$$

\medskip
\subsection{Example from Bezuglyi, Kwiatkowski, Medynets and Solomyak}
\label{third-example}

${}^{}$

This example has already been treated in \cite{BKMS}, Example 5.8 (up to a permutation of the letters); we present here the alternative treatment by our methods.
\[
\sigma:  \, \begin{cases} a\mapsto baaad\\ b\mapsto bc\\ c\mapsto cb\\ d\mapsto de\\ e\mapsto ed\end{cases}\]
The augmented incidence matrix $M_\sigma^+$ has size $30$. Its upper diagonal block is equal to the incidence matrix $M_\sigma$, while the lower one is given by $S\otimes P$. We compute:
\[ M_\sigma=\begin{pmatrix}
3&0&0&0&0\\ 
1&1&1&0&0\\ 
0&1&1&0&0\\ 
1&0&0&1&1\\ 
0&0&0&1&1
\end{pmatrix}, \,\,\,
S=\begin{pmatrix}
0&0&0&0&0\\ 
0&0&1&0&0\\ 
0&1&0&0&0\\ 
1&0&0&0&1\\ 
0&0&0&1&0
\end{pmatrix}, \,\,\,
P=\begin{pmatrix}
0&0&0&0&0\\ 
1&1&0&0&0\\ 
0&0&1&0&0\\ 
0&0&0&1&0\\ 
0&0&0&0&1
\end{pmatrix} 
\]
Next we list the occurrences of words of length 2 in the images of the generators, which serve to determine the columns of the lower left off-diagonal block of 
$M_\sigma^+$. As before, these columns are written 
in transposed form, with $\cal A = \{a, b, c, d, e\}$ and indices ordered lexicographically:
$$(|\sigma(a)|_{xy})_{x, y \in \cal A} = (2,0,0,1,0\,\vdots\, 1,0,0,0,0\,\vdots\,0,0,0,0,0\,\vdots\,0,0,0,0,0\,\vdots\,0,0,0,0,0)
$$
$$(|\sigma(b)|_{xy})_{x, y \in \cal A} = (0,0,0,0,0\,\vdots\, 0,0,1,0,0\,\vdots\,0,0,0,0,0\,\vdots\,0,0,0,0,0\,\vdots\,0,0,0,0,0)
$$
$$(|\sigma(c)|_{xy})_{x, y \in \cal A} = (0,0,0,0,0\,\vdots\, 0,0,0,0,0\,\vdots\,0,1,0,0,0\,\vdots\,0,0,0,0,0\,\vdots\,0,0,0,0,0)
$$
$$(|\sigma(d)|_{xy})_{x, y \in \cal A} = (0,0,0,0,0\,\vdots\, 0,0,0,0,0\,\vdots\,0,0,0,0,0\,\vdots\,0,0,0,0,1\,\vdots\,0,0,0,0,0)
$$
$$(|\sigma(e)|_{xy})_{x, y \in \cal A} = (0,0,0,0,0\,\vdots\, 0,0,0,0,0\,\vdots\,0,0,0,0,0\,\vdots\,0,0,0,0,0\,\vdots\,0,0,0,1,0)
$$

\smallskip

We now observe that all three diagonal blocks of $M_\sigma$ are distinguished, so that for each of them there is a non-negative eigenvector of $M_\sigma^+$ with eigenvalue $> 1$, giving rise to 3 distinct ergodic probability measures $\mu_{\text{full}}, \mu_{b,c}$ and $\mu_{d, e}$ on the substitution subshift $X_\sigma$.

\medskip
$\bullet$ 
For each of {\em the two lower diagonal blocks} the substitution $\sigma$ defines a ``sub-substitution'' $\sigma_{b, c}$ and $\sigma_{d,e}$, since both, the submonoids generated by $b$ and $c$ as well as that for $d$ and $e$, are $\sigma$-invariant. Both of these sub-substitutions are (up to renaming of the generators) equal to the Thue-Morse substitution $\sigma_{\rm TM}$ from section \ref{cylinder-determination}. They define minimal sub-subshifts $X_{b, c} \subset X_\sigma$ and $X_{d, e} \subset X_\sigma$, which are precisely the support of the measures $\mu_{inc}$ and $\mu_{d,e}$ respectively. Since the Thue-Morse substitutions has already been treated in detail in section \ref{cylinder-determination}, we will skip here the corresponding computations.

\medskip

$\bullet$ 
The {\em top diagonal block} (of size $1 \times 1$) of $M_\sigma$ has eigenvalue $3$ and determines a non-negative eigenvector $\vec v^+$ of $M_\sigma^+$, which is chosen here as to have integer coefficients. In order to determine from $\vec v^+$ via Remark \ref{rem-cylindres2} the $\mu_{\text{full}}$-measures for the cylinders of size 1 and 2 we then rescale the values of the coordinates (by the factor $\frac{1}{36}$) to get total measure $1$.

The first 5 coordinates of $\vec v^+$ define a ``subvector'' $\vec v = (v_x)_{x \in \cal A}$ which satisfies 
$M_\sigma \vec v=3 \vec v$.
We compute $\vec v = {}^t(12, 8, 4, 8, 4)$, which
gives:
$$
\mu_{\text{full}}([a]) = \frac{1}{3}, \,  \mu_{\text{full}}([b]) \frac{2}{9},\, \mu_{\text{full}}([c])= \frac{1}{9}, \, \mu_{\text{full}}([d])= \frac{2}{9}, \, \mu_{\text{full}}([e])=\frac{1}{9}
$$

We then use the above described coefficients, for the lower diagonal 
and the lower left off-diagonal block of the matrix $M_\sigma^+$, to compute 
the last 25 coordinates $v_{xy}$ of $\vec v^+$, with $x, y \in \cal A$:
\[ \begin{array}{|c|c|c|c|c|c|c|c|c|c|c|c|c|}
\hline
\rule [-0.2cm]{0mm}{0.3cm}
v_{aa}&v_{ab}&v_{ac}&v_{ad}&v_{ae}&&v_{ba}&v_{bb}&v_{bc}&v_{bd}&v_{be}\\
\hline
\rule [0cm]{0mm}{0.4cm}
8&0&0&4&0&&4&1&3&0&0\\
\hline
\end{array}\]
\[
 \begin{array}{|c|c|c|c|c|c|c|c|c|c|c|c|c|}
\hline
\rule [-0.2cm]{0mm}{0.3cm}
v_{ca}&v_{cb}&v_{cc}&v_{cd}&v_{ce}&&v_{da}&v_{db}&v_{dc}&v_{dd}&v_{de}\\
\hline
\rule [0cm]{0mm}{0.4cm}
0&3&1&0&0&&0&3&0&2&3\\
\hline
\end{array}
\]

\[ \begin{array}{|c|c|c|c|c|}
\hline
\rule [-0.2cm]{0mm}{0.3cm}
v_{ea}&v_{eb}&v_{ec}&v_{ed}&v_{ee}\\
\hline
\rule [0cm]{0mm}{0.4cm}
0&1&0&2&1\\

\hline
\end{array}\]

Hence all cylinders of size 2 have $\mu_{\text{full}}$-measure 0, except for:
$$
\mu_{\text{full}}([aa]) = \frac{2}{9}, \, \mu_{\text{full}}([ad]) = \frac{1}{9}, \, \mu_{\text{full}}([ba]) = \frac{1}{9}, \, \mu_{\text{full}}([bb]) = \frac{1}{36}, \,  \mu_{\text{full}}([bc]) = \frac{1}{12}, 
$$
$$
\mu_{\text{full}}([cb]) = \frac{1}{12}, \, \mu_{\text{full}}([cc]) = \frac{1}{36}, \, \mu_{\text{full}}([db]) = \frac{1}{12},\, \mu_{\text{full}}([dd]) = \frac{1}{18},
$$
$$
\mu_{\text{full}}([de]) = \frac{1}{12}, \, \mu_{\text{full}}([eb]) = \frac{1}{36}, \, \mu_{\text{full}}([ed]) = \frac{1}{18}, \, \mu_{\text{full}}([ee]) = \frac{1}{36}
$$

We observe that 
%(contrary to what has been claimed in Example 5.8 of \cite{BKMS}) 
$\mu_{\text{full}}$ has indeed full support on $X_\sigma$, as follows in more generality from the fact that the eigenvector $\vec v$ of $M_\sigma$ is positive.

Since the $\sigma$-image of every 
generator $a_i$ has length $|a_i| \geq 2$, 
we can use the above values also for a direct evaluation via Proposition \ref{cylinder-formula2} of any cylinder $[w]$ of size $|w| = 3$, through a quick calculation of the occurrence vector $\vec v(w)_0$. For example, this occurence vector for $w = edb$ has coefficients equal to 1 only in the coordinates $ea$ and $eb$, giving 
$$\mu_{\text{full}}([edb]) = \mu_{\text{full}}([ea]) + \mu_{\text{full}}([eb]) = \frac{1}{36}\, ,$$
while for $w = cba$ we obtain
$$\mu_{\text{full}}([cba]) = \mu_{\text{full}}([ba]) = \frac{1}{9} \, .$$

%%%%%%%%
\medskip
\subsection{A family of examples with varying number of ergodic measures}
\label{second-example}

${}^{}$

For any integer $k \geq 1$ we consider the substitution $\sigma_k$ defined by:
$$\tau:\begin{cases}c\mapsto cd\\ d\mapsto c\end{cases}, \qquad \sigma_k:\begin{cases}a\mapsto bacaab\\ b\mapsto aba\\ c\mapsto \tau^k(c)\\ d\mapsto \tau^k(d)\end{cases}$$
We compute the incidence matrix for $\sigma_k$:
$$
M_{\sigma_k} = 
\left[
\begin{array}{cccc}
3& 2 & 0 & 0 \\
2 & 1 & 0 & 0  \\
1 & 0 &  a_k& b_k  \\
0 & 0 & c_k & d_k 
\end{array} 
\right]
\quad \text{with} \quad
\left[
\begin{array}{cc}
a_k & b_k  \\
c_k&d_k
\end{array} 
\right]
=\left[
\begin{array}{cc}
1 & 1  \\
1&0
\end{array} 
\right]^k = M_\tau^k
$$
We note that for any $k \geq 1$ the Corollary \ref{2-strata-case} applies, and since $\tau$ is up to a change of generators equal to the Fibonacci substitution $\sigma_{\rm Fib}$ treated extensively in section \ref{cylinder-determination}, we know already that the primitive bottom diagonal block $M_\tau^k$ of $M_{\sigma_k}$ has PF-eigenvalue $\phi^k$. Its corresponding eigenvector determines an invariant probability measure which is supported only on the bottom stratum of $\sigma_k$, defined by $c$ and $d$, and the cylinder values are precisely those computed in section \ref{cylinder-determination} for the Fibonacci substitution.

The top diagonal block of $M_{\sigma_k}$ is independent of $k$ and turns out to be actually equal to $M_\tau^3$: it thus has PF-eigenvalue $\phi^3$. Hence for $k \geq 3$ case (1) of Corollary \ref{2-strata-case} holds, so that the above described invariant ``Fibonacci'' measure supported on the bottom stratum is the only invariant measure on the substitution subshift $X_{\sigma_k}$.

For $k=1$ and $k=2$, however, we find ourselves in case (2) of Corollary \ref{2-strata-case}, and hence in both cases there is a second invariant probability measure $\mu_k$ with full support, which we will now consider in detail:

\medskip
\noindent
{\bf The case $k=1$:} 
We first compute an integer PF-eigenvector 
$^t(v_a, v_b, v_c, v_d)$ 
for the eigenvalue $\phi^3$ of the above given (non-augmented) incidence matrix $M_{\sigma_1}$, 
where we note that $\varphi^3=2\varphi+1$:
\[\begin{array}{|c|c|c|c|}
\hline
\rule [-0.2cm]{0mm}{0.3cm}
v_a&v_b&v_c&v_d\\
\hline
\rule [-0.2cm]{0mm}{0.65cm}
3\varphi^3&3\varphi^2& \varphi^3&1\\
\hline
\end{array}\]
This gives directly the measure of the cylinders of size 1, through the normalization $\mu_1([x]) =\frac{1}{11\varphi+8} v_x$ for any $x \in \cal A = \{a, b, c, d\}$.

For the cylinders of size 2 
we compute (as in the previous examples) the augmented incidence matrix $M_{\sigma_1}^+$ of $\sigma_1$, which has size $20$. We note:
$$
S_{\sigma_1} = 
\left[
\begin{array}{cccc}
0 & 1 & 0 & 0 \\
1 & 0 & 0 & 0 \\
0 & 0 & 0 & 1 \\
0 & 0 & 1 & 0
\end{array} 
\right],
\;\;
P_{\sigma_1} = 
\left[
\begin{array}{cccc}
0 & 1 & 0 & 0 \\
1 & 0 & 0 & 0 \\
0 & 0 & 1 & 1 \\
0 & 0 & 0 & 0
\end{array}
\right].
$$
Next we list the occurrences of words of length 2 in the images of the generators, which serve to determine the columns of the lower left off-diagonal block of 
$M_\sigma^+$. These columns are written as in the previous examples in transposed form, with 
indices ordered lexicographically:
$$
(|\sigma_1(a)|_{xy})_{x, y \in \cal A} = 
(1,1,1,0 \,\vdots\, 1,0,0,0\,\vdots\,1,0,0,0\,\vdots\,0,0,0,0)
$$
$$
(|\sigma_1(b)|_{xy})_{x, y \in \cal A} = 
(0,1,0,0\,\vdots\,1,0,0,0\,\vdots\,0,0,0,0\,\vdots\,0,0,0,0)
$$
$$
(|\sigma_1(c)|_{xy})_{x, y \in \cal A} = 
(0,0,0,0\,\vdots\,0,0,0,0\,\vdots\,0,0,0,1\,\vdots\,0,0,0,0)
$$
$$
(|\sigma_1(d)|_{xy})_{x, y \in \cal A} = 
(0,0,0,0\,\vdots\,0,0,0,0\,\vdots\,0,0,0,0\,\vdots\,0,0,0,0)
$$
We now compute a right PF-eigenvector of $M_{\sigma_1}^+$ associated to $\varphi^3$, with coefficients $v_w$ for any $|w| \leq 2$ that agree for $|w| = 1$ with the above coefficients $v_x$ for the eigenvector of $M_{\sigma_1}$.
From the above specified coefficients of $M_{\sigma_1}^+$ we derive
through a minor computational effort:
$$\begin{cases}
v_a+v_{bb}=\varphi^3 v_{aa}\\
v_a+v_b+v_{ba}=\varphi^3 v_{ab}\\
v_a+v_{bc}+v_{bd}=\varphi^3 v_{ac}\\
0=\varphi^3 v_{ad}\end{cases}
\begin{cases}
v_a+v_b+v_{ab}=\varphi^3 v_{ba}\\
v_{aa}=\varphi^3 v_{bb}\\
v_{ac}+v_{ad}=\varphi^3 v_{bc}\\
0=\varphi^3 v_{bd}\end{cases}$$
$$
\begin{cases}
v_a + v_{db}=\varphi^3 v_{ca}\\
v_{da}=\varphi^3 v_{cb}\\
v_{dc}+v_{dd}=\varphi^3 v_{cc}\\
v_c=\varphi^3 v_{cd}\end{cases}
\begin{cases}
v_{cb}=\varphi^3 v_{da}\\
v_{ca}=\varphi^3 v_{db}\\
v_{cc}+v_{cd}=\varphi^3 v_{dc}\\
0=\varphi^3 v_{dd}\end{cases}
$$
This gives:
\[
\begin{array}{|c|c|c|c|c|c|c|c|c|c|c|}
\hline
\rule [-0.2cm]{0mm}{0.3cm}
v_{aa}&v_{ab}&v_{ac}&v_{ad}&&v_{ba}&v_{bb}&v_{bc}&v_{bd}\\
\hline
\rule [-0.35cm]{0mm}{0.85cm}
\frac{\varphi^3 v_a}{\phi^6 -1}&\frac{v_a + v_b}{\phi^3 -1}&\frac{\varphi^3 v_a}{\phi^6 -1}&0&&\frac{v_a + v_b}{\phi^3 -1}&\frac{v_a}{\phi^6 -1}&\frac{v_a}{\phi^6 -1}&0\\
\hline
\end{array}\]
\[
\begin{array}{|c|c|c|c|c|c|c|c|c|c|c|}
\hline
\rule [-0.2cm]{0mm}{0.3cm}
v_{ca}&v_{cb}&v_{cc}&v_{cd}&&v_{da}&v_{db}&v_{dc}&v_{dd}\\
\hline
\rule [-0.35cm]{0mm}{0.85cm}
\frac{\phi^3 v_a}{\phi^6 -1}&0&\frac{1}{\phi^6 -1}&1&&0&\frac{v_a}{\phi^6 -1}&\frac{v_c}{\phi^6 -1}&0\\
\hline
\end{array}
\]
As above, we obtain the measure of the cylinders of size $|w|= 2$ through the normalization $\mu_1([w]) =\frac{1}{11\varphi+8} v_w$.

\medskip
\bigskip
\noindent
{\bf The case $k=2$:} 
We proceed precisely as in the case $k = 1$ to compute from $M_{\sigma_2}$ an integer eigenvector for the eigenvalue $\phi^3$:
\[\begin{array}{|c|c|c|c|}
\hline
\rule [-0.2cm]{0mm}{0.3cm}
v_a&v_b&v_c&v_d\\
\hline
\rule [-0.2cm]{0mm}{0.65cm}
5\varphi+2&2\varphi+3&2\varphi+2&\varphi\\
\hline
\end{array}\]
We next determine 
$$
S_{\sigma_2} = 
\left[
\begin{array}{cccc}
0 & 1 & 0 & 0 \\
1 & 0 & 0 & 0 \\
0 & 0 & 1 & 0 \\
0 & 0 & 0 & 1
\end{array} 
\right],
\;\;
P_{\sigma_2} = 
\left[
\begin{array}{cccc}
0 & 1 & 0 & 0 \\
1 & 0 & 0 & 0 \\
0 & 0 & 1 & 1 \\
0 & 0 & 0 & 0
\end{array}
\right].
$$
as well as: 
$$
(|\sigma_2(a)|_{xy})_{x, y \in \cal A} = 
(1,1,1,0\,\vdots\,1,0,0,0\,\vdots\,1,0,0,0\,\vdots\,0,0,0,0)
$$
$$
(|\sigma_2(b)|_{xy})_{x, y \in \cal A} = 
(0,1,0,0\,\vdots\,1,0,0,0\,\vdots\,0,0,0,0\,\vdots\,0,0,0,0)
$$
$$
(|\sigma_2(c)|_{xy})_{x, y \in \cal A} = 
(0,0,0,0\,\vdots\,0,0,0,0\,\vdots\,0,0,0,1\,\vdots\,0,0,1,0)
$$
$$
(|\sigma_2(d)|_{xy})_{x, y \in \cal A} = 
(0,0,0,0\,\vdots\,0,0,0,0\,\vdots\,0,0,0,1\,\vdots\,0,0,0,0)
$$
We thus compute
$$\begin{cases}
v_a+v_{bb}=\varphi^3 v_{aa}\\
v_a+v_b+v_{ba}=\varphi^3 v_{ab}\\
v_a+v_{bc}+v_{bd}=\varphi^3 v_{ac}\\
0=\varphi^3 v_{ad}\end{cases}
\begin{cases}
v_a+v_b+v_{ab}=\varphi^3 v_{ba}\\
v_{aa}=\varphi^3 v_{bb}\\
v_{ac}+v_{ad}=\varphi^3 v_{bc}\\
0=\varphi^3 v_{bd}\end{cases}$$
$$
\begin{cases}
v_a+v_{cb}=\varphi^3 v_{ca}\\
v_{ca}=\varphi^3 v_{cb}\\
v_{cc}+v_{cd}=\varphi^3 v_{cc}\\
v_c+v_d=\varphi^3 v_{cd}\end{cases}
\begin{cases}
v_{db}=\varphi^3 v_{da}\\
v_{da}=\varphi^3 v_{db}\\
v_c+v_{dc}+v_{dd}=\varphi^3 v_{dc}\\
0=\varphi^3 v_{dd}\end{cases}
$$
and obtain:
\[
\begin{array}{|c|c|c|c|c|c|c|c|c|c|c|}
\hline
\rule [-0.2cm]{0mm}{0.3cm}
v_{aa}&v_{ab}&v_{ac}&v_{ad}&&v_{ba}&v_{bb}&v_{bc}&v_{bd}\\
\hline
\rule [-0.35cm]{0mm}{0.85cm}
\frac{\varphi^3 v_a}{\phi^6 -1}&\frac{v_a + v_b}{\phi^3 -1}&\frac{\varphi^3 v_a}{\phi^6 -1}&0&&\frac{v_a + v_b}{\phi^3 -1}&\frac{v_a}{\phi^6 -1}&\frac{v_a}{\phi^6 -1}&0\\
\hline
\end{array}\]

\[
\begin{array}{|c|c|c|c|c|c|c|c|c|c|c|}
\hline
\rule [-0.2cm]{0mm}{0.3cm}
v_{ca}&v_{cb}&v_{cc}&v_{cd}&&v_{da}&v_{db}&v_{dc}&v_{dd}\\
\hline
\rule [-0.35cm]{0mm}{0.85cm}
\frac{v_a\varphi^3}{\varphi^6-1}&\frac{v_a}{\varphi^6-1}&\frac{v_c+v_d}{\varphi^3(\varphi^3-1)}&\frac{v_c+v_d}{\varphi^3}&&0&0&\frac{v_c}{\phi^3-1}&0\\
\hline
\end{array}
\]
As for $k=1$ we obtain the measure of any cylinder of size $|w|\leq 2$ from the coefficients $v_w$ through a normalization given by:
$$\mu_2([w]) =\frac{v_w}{10\varphi+7}$$

%%%%%%%%%%%%%%%%%%%%%%
%%%%
%%%%%%%%%%%%
%%%%
%%%%%%%%%%%%%%%%%%%%%%

%\bibliographystyle{abbrv}
%\bibliography{biblio-BHL2}

\end{document}